\newcommand{\eij}{e_{ij}}
\newcommand{\epq}{e_{pq}}
\newcommand{\mpq}{m_{pq}}
\newcommand{\one}{\mathbf{1}}
\newcommand{\lex}[1]{{ \,\underset{\mathrm{lex}}{#1}\, }}
\newcommand{\surj}{\twoheadrightarrow}
\newcommand{\mor}{\rightarrow}
\newcommand{\schub}{\mathfrak{S}}
\newcommand{\smod}{\mathcal{S}}
\newcommand{\der}{\partial}
\newcommand{\borel}{\mathfrak{b}}
\newcommand{\nplus}{\mathfrak{n}^+}
\newcommand{\csa}{\mathfrak{h}}
\newcommand{\ualg}{\mathcal{U}}
\newcommand{\inv}{\mathrm{inv}}
\newcommand{\ch}{\mathrm{ch}}
\newcommand{\Ext}{\mathrm{Ext}}
\newcommand{\Hom}{\mathrm{Hom}}
\newcommand{\NN}{\mathbb{N}}
\newcommand{\nonneg}{\ZZ_{\geq 0}}
\newcommand{\ZZ}{\mathbb{Z}}
\newcommand{\lmb}{\lambda}
\newtheorem{lem}{Lemma}[section]
\newtheorem{prop}[lem]{Proposition}
\newtheorem{thm}[lem]{Theorem}
\newtheorem{cor}[lem]{Corollary}
\newtheorem*{thm*}{Theorem}
\theoremstyle{definition}
\newtheorem{rmk}[lem]{Remark}
\newtheorem{eg}[lem]{Example}
\title{Tensor product of Kra\'skiewicz and Pragacz's modules}
\author{
Masaki Watanabe \\ 
Graduate School of Mathematical Sciences, The University of Tokyo, \\
3-8-1 Komaba Meguro-ku Tokyo 153-8914, Japan \\ \texttt{mwata@ms.u-tokyo.ac.jp}}
\date{\empty}
\begin{document}
\maketitle

\noindent\textbf{Abstract. } This paper explores further properties of modules related with Schubert polynomials, introduced by Kra\'skiewicz and Pragacz. 
In this paper we show that any tensor product of Kra\'skiewicz-Pragacz modules admits a filtration by Kra\'skiewicz-Pragacz modules. This result can be seen as a module-theoretic counterpart of a classical result that the product of Schubert polynomials is a positive sum of Schubert polynomials. 

\section{Introduction}
The study of Schubert polynomials is an important and interesting subject in algebraic combinatorics. 
One of the possible methods for studying Schubert polynomials is through the modules introduced by Kra\'skiewicz and Pragacz (\cite{KP}). 
Let $\borel$ be the Lie algebra of all upper triangular matrices. 
For each permutation $w$, Kra\'skiewicz and Pragacz defined a representation  $\smod_w$ of $\borel$ such that its character with respect to the subalgebra $\csa$ of all diagonal matrices is equal to the Schubert polynomial $\schub_w$. 
In the author's previous paper \cite{W}, the author investigated the characterization of modules having a filtration with subquotients isomorphics to Kra\'skiewicz-Pragacz modules (or a \textit{KP filtration} for short). One of the motivation for studying such class of modules is a study of Schubert-positivity: if a module $M$ has a KP filtration, then it follows that its character $\ch(M)$ is a positive sum of Schubert polynomials. 

One of the positivity properties of Schubert polynomials is that the product $\schub_w\schub_v$ of Schubert polynomials is a positive sum of Schubert polynomials. 
In this paper we show that the module-theoretic counterpart of this fact holds.
The main result of this paper is the following theorem (Theorem \ref{mainthm}):
\begin{thm*}
For any $w, v \in S_\infty^{(n)}$, the module $\smod_w \otimes \smod_v$ has a KP filtration.
\end{thm*}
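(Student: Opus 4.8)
We sketch the argument. The main ingredient is the cohomological characterization of KP filtrations from \cite{W}: working in the category $\catc$ of $\borel$-modules used there, equipped with its weight-preserving contravariant duality $(-)^\vee$ fixing the simples, a module $M$ has a KP filtration if and only if $\Ext^1_\catc(M, \nabla_u) = 0$ for all permutations $u$, where $\nabla_u := \smod_u^\vee$; in particular the class of KP-filtered modules is then closed under extensions, under direct summands, and under kernels of surjections between KP-filtered modules. So it suffices to prove $\Ext^1_\catc(\smod_w \otimes \smod_v, \nabla_u) = 0$ for all $u$, and since $\smod_{\mathrm{id}}$ is the trivial module the case $v = \mathrm{id}$ is clear. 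The Hom--tensor adjunction rewrites this as $\Ext^1_\catc(\smod_w, \smod_v^* \otimes \nabla_u)$ with $\smod_v^*$ the appropriately twisted linear dual; as $\smod_w$ is KP-filtered it would suffice that $\smod_v^* \otimes \nabla_u$ be $\nabla$-filtered, and applying $(-)^\vee$ one sees this is the same as $\smod_{\hat v} \otimes \smod_u$ being KP-filtered, where $\hat v$ is explicitly determined by $v$ and by $n$ (via the longest element $w_0 \in S_n$) --- this is the point at which working with a fixed $n$ enters. Thus the statement is self-dual, and the genuine input has to be a module-theoretic lift of a recurrence for Schubert polynomials.

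Concretely, I would establish a module-level transition rule: for $v \neq \mathrm{id}$, with $r$ the last descent of $v$, $s$ the associated larger index, and $v' = v t_{rs}$ (so $\ell(v') = \ell(v) - 1$), construct a short exact sequence of $\borel$-modules relating $\smod_v$, the weight twist $\smod_{v'} \otimes \CC_{\epsilon_r}$, and the KP modules $\smod_{v' t_{ir}}$ ($i < r$) occurring in the transition formula for $\schub_v$. Tensoring such sequences with $\smod_w$ and applying the closure properties above, one feeds the pieces into an induction. Since the permutations $v' t_{ir}$ have the same length as $v$, the induction cannot be run on $\ell(v)$ but must be organized along the Lascoux--Schützenberger order, under which transition strictly decreases and whose minimal elements are the dominant permutations; the base case is therefore $v$ dominant, i.e. $\smod_w \otimes \CC_\lambda$ for a partition $\lambda$.

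The hard part, I expect, is the interaction of the recurrence with the weight twists $\CC_{\epsilon_r}$: these are not KP-filtered in general (already $\CC_{\epsilon_2}$ is not), so the twists cannot be disposed of one at a time. The way around should be to track the cumulative twist through the recursion --- one expects, using that the transition pivots are weakly decreasing, that it always accumulates to $\CC_\mu$ with $\mu$ a partition --- which reduces everything to the base case $\smod_w \otimes \CC_\lambda$ with $\lambda$ a partition. Since $\CC_\lambda$ cannot be built from the one-dimensional KP module $\CC_{\epsilon_1} = \smod_{s_1}$ by iterated tensoring, that base case then requires a direct construction of the KP filtration of $\smod_w \otimes \CC_\lambda$; establishing this, together with the short exact sequences above, is where the essential work lies.
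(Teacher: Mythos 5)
Your proposal takes a genuinely different route from the paper, but it stops short of a proof: the three steps you flag as "where the essential work lies" are in fact the entire content of the theorem, and at least one of the claims you lean on is shaky.

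The paper's argument has a different shape, and it is worth seeing why it avoids the difficulties you ran into. It first proves the Monk case $\smod_w \otimes \smod_{s_\nu}$ directly, by exhibiting explicit generators $v_{pq} = e_{pq}^{m_{pq}(w)}u_w \otimes u_p$ and maps $\phi_{pq}$ onto the $\smod_{wt_{pq}}$ and matching dimensions via Monk's formula. It then embeds $\smod_w$ into the module $T_w = \bigotimes_i \bigwedge^{l_i(w)} K^{i-1}$, which is a direct summand of an iterated tensor product of the fundamental KP modules $\smod_{s_{i-1}} = K^{i-1}$; so $T_w \otimes \smod_v$ is KP-filtered by Monk plus closure under direct summands. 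The cokernel $N$ of $\smod_w \hookrightarrow T_w$ is shown (via the Cauchy pairing) to be filtered by $\smod_u$ with $u^{-1}$ strictly larger than $w^{-1}$ in lex order, and the induction runs on that order using the two-out-of-three property from Corollary \ref{filtrlem}. Crucially, no one-dimensional weight twists $K_\mu$ ever appear: the only building blocks are wedge powers of the $K^i$, which are themselves direct summands of tensor powers of KP modules.

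By contrast, your transition-formula approach forces you to confront exactly the twist problem you identify. Three concrete gaps. (i) You do not construct the short exact sequence of $\borel$-modules realizing the transition $\schub_v = x_r\schub_{v'} + \sum_i \schub_{v't_{ir}}$; this is a nontrivial claim (which direction do the maps go, and why is it exact?), and it is not in the cited literature. (ii) The claim that the accumulated twist along the twisted branch of the transition tree is always a partition is asserted but not verified, and it is not clear it holds: the last descent of $v'=vt_{rs}$ need not be $\leq r$ in a way that forces $\sum \epsilon_{r_j}$ to be weakly decreasing, and even if it does, a sum $\epsilon_{r_1}+\cdots+\epsilon_{r_k}$ with $r_1\geq\cdots\geq r_k$ need not be a partition. (iii) Even granting (i) and (ii), your base case $\smod_w\otimes K_\lambda$ with $\lambda$ a partition is not a degenerate case you can dispatch separately: for $\lambda$ a partition with $\lambda_i\leq n-i$, $K_\lambda$ is precisely $\smod_u$ for the dominant permutation $u$ with $\inv(u)=\lambda$, so the base case is itself an instance of the theorem with no evident reduction in complexity. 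And if you try to run the transition recursion on $w$ to handle the base case, the new twist $\epsilon_r$ gets added to $\lambda$ with no guarantee the sum remains a partition. In short, your reduction is circular at the base and has an unconstructed middle. The paper's $T_w$ embedding is exactly the device that breaks this circularity, because it trades weight twists for wedge powers of $K^i$, which are harmless.
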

Not only finding a module-theoretic counterpart to the classical result, we also present a new interesting positivity result on Schubert polynomials (Corollary \ref{pleth}) using our main result. 

The paper is organized as follows. 
In section \ref{prelimi}, we prepare some basic facts about Schubert polynomials and KP modules, as well as some results from the author's previous paper \cite{W}. 
In section \ref{monkcase}, we prove the special case of our main theorem corresponding to Monk's formula for Schubert polynomials: 
we show that $\smod_w \otimes \smod_{s_\nu}$ has a KP filtration for any $w \in S_\infty^{(n)}$, $1 \leq \nu \leq n-1$. 
The problem asking the existence of such filtration
was first raised by Kra\'skiewicz and Pragacz \cite[Remark 5.2]{KP}, 
and our result gives the answer to this problem. 
In section \ref{generalcase} we deal with the general case and prove the main theorem, 
and present some consequences of our result. 

\section{Preliminaries}
\label{prelimi}
Let $\NN$ be the set of all positive integers and let $\nonneg$ be the set of all nonnegative integers. 
A \textit{permutation} $w$ is a bijection from $\NN$ to itself which fixes all but finite points. 
Let $S_\infty$ denote the group of all permutations. 
Let $n$ be a positive integer. 
Let $S_n =\{w \in S_\infty : w(i)=i \; (i>n)\}$
and $S_\infty^{(n)}=\{w \in S_\infty : w(n+1)<w(n+2)<\cdots\}$. 
For $i < j$, let $t_{ij}$ denote the permutation which exchanges $i$ and $j$ and fixes all other points. 
Let $s_i=t_{i,i+1}$. 
For a permutation $w$, let $\ell(w)=\#\{i<j : w(i)>w(j)\}$. 
Let $w_0 \in S_n$ be the longest element of $S_n$, i.e. $w_0(i)=n+1-i$ ($1 \leq i \leq n$). 
For $w \in S_\infty^{(n)}$ we define $\inv(w)=(\inv(w)_1, \ldots, \inv(w)_n) \in \nonneg^n$
by $\inv(w)_i=\#\{j : i<j, w(i)>w(j)\}$. 

For a polynomial $f=f(x_1, \ldots, x_n)$ and $1 \leq i \leq n-1$, we define $\der_if=\frac{f-s_if}{x_i-x_{i+1}}$. 
For $w \in S_\infty^{(n)}$ we can assign its \textit{Schubert polynomial} $\schub_w \in \ZZ[x_1, \ldots, x_n]$, which is recursively defined by 
\begin{itemize}
\item $\schub_{w}=x_1^{w(1)-1}x_2^{w(2)-1} \cdots x_n^{w(n)-1}$ if $w(1)>w(2)>\cdots>w(n)$, and
\item $\schub_{ws_i}=\der_i\schub_w$ if $\ell(ws_i)<\ell(w)$. 
\end{itemize}
Note that the sets $\{\schub_w : w \in S_\infty^{(n)}\}$ and $\{\schub_w : w \in S_n\}$
constitute $\ZZ$-bases for the spaces $\ZZ[x_1, \ldots, x_n]$
 and $H_N=\bigoplus_{0 \leq a_i \leq N-i} \ZZ x_1^{a_1} \cdots x_{N-1}^{a_{N-1}}$ respectively (\cite[(4.11)]{Mac}). 

Schubert polynomials satisfy the following identity called Monk's formula: 
\begin{prop}[{{\cite[$(4.15'')$]{Mac}}}]
Let $w \in S_\infty$ and $\nu \in \NN$. 
Then $\schub_w \schub_{s_\nu} = \sum \schub_{wt_{pq}}$ where the sum is over all $(p,q)$ such that $p \leq \nu < q$ and $\ell(wt_{pq})=\ell(w)+1$. 
\label{polymonk}
\end{prop}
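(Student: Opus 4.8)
My plan is to argue by induction on $\ell(w)$ using the divided difference operators. First I would dispose of the base ingredient $\schub_{s_\nu}=x_1+x_2+\cdots+x_\nu$ --- e.g.\ by applying the defining recursion to the monomial Schubert polynomial $x_1^{\nu}x_2^{\nu-1}\cdots x_{\nu}^{1}$ of the longest element of $S_{\nu+1}$ along a reduced word carrying it down to $s_\nu$ --- and record the immediate consequence $\der_i\schub_{s_\nu}=\delta_{i\nu}$. The case $\ell(w)=0$, i.e.\ $w=\mathrm{id}$, is then clear: the only pair $(p,q)$ with $p\le\nu<q$ and $\ell(t_{pq})=1$ is $(\nu,\nu+1)$, and $t_{\nu,\nu+1}=s_\nu$.

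For the inductive step, assume the identity for all permutations of length $<\ell(w)$, choose a descent $i$ of $w$ (so $\ell(ws_i)=\ell(w)-1$), and set $u=ws_i$. I would apply $\der_i$ to the asserted identity for $w$. On the left, the Leibniz rule together with $\der_i\schub_w=\schub_u$ gives $\der_i(\schub_w\schub_{s_\nu})=\schub_u\schub_{s_\nu}+\delta_{i\nu}\,(s_\nu\schub_w)$. On the right, the conjugation identity $t_{pq}s_i=s_i\,t_{s_i(p),s_i(q)}$ yields $wt_{pq}s_i=u\,t_{s_i(p),s_i(q)}$, so that each $\der_i\schub_{wt_{pq}}$ is either $0$ or $\schub_{u\,t_{s_i(p),s_i(q)}}$ according as $s_i$ is or is not a descent of $wt_{pq}$. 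The idea is to recognise the resulting sum, via the inductive hypothesis applied to $u$, as (a modification of) $\schub_u\schub_{s_\nu}=\sum_{a\le\nu<b,\ \ell(ut_{ab})=\ell(u)+1}\schub_{ut_{ab}}$, and thereby deduce the identity for $w$. The combinatorial heart is the study of the map $(p,q)\mapsto(s_i(p),s_i(q))$ (re-sorted): for $i\ne\nu$ it is a bijection between pairs with $p\le\nu<q$ and pairs with $a\le\nu<b$, carrying the condition ``$\ell(wt_{pq})=\ell(w)+1$ and $s_i$ a descent of $wt_{pq}$'' to ``$\ell(ut_{ab})=\ell(u)+1$ and $s_i$ an ascent of $ut_{ab}$''. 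This would be proved by inversion counting, split according to where $\{i,i+1\}$ sits relative to $\nu$ and to $\{p,q\}$.

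The step I expect to be the main obstacle is that this comparison is \emph{not} term by term. Applying $\der_i$ to the right-hand side of the $w$-identity yields the right-hand side of the $u$-identity \emph{minus} the contributions of those pairs $(a,b)$ with $a\le\nu<b$, $\ell(ut_{ab})=\ell(u)+1$ for which $s_i$ is in fact a \emph{descent} of $ut_{ab}$; and when every descent of $w$ equals $\nu$ (as happens for $w$ close to $s_\nu$) there is the further Leibniz correction $s_\nu\schub_w$. Making these corrections cancel is where the real work lies: one must either carry along a strengthened inductive statement that also computes $\der_i(\schub_w\schub_{s_\nu})$ for the ``bad'' indices $i$, or identify the difference of the two sides of the $w$-identity --- a homogeneous polynomial of degree $\ell(w)+1$ --- by a leading-term comparison against the Schubert basis (equivalently, by pairing against the dual Schubert basis). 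A route that sidesteps this bookkeeping is to pass to the nilCoxeter algebra and the Fomin--Stanley identity $\sum_w\schub_w(x)\,u_w=\mathfrak{A}(x)$: there multiplication by $x_1+\cdots+x_\nu$ is implemented by an explicit operator on $\bigoplus_w\ZZ u_w$, and Monk's formula reduces to an identity among the $u_w$ following from $u_j^2=0$ and the braid relations; alternatively one can give a bijective proof on the compatible-sequence (Billey--Jockusch--Stanley) model of $\schub_w$.
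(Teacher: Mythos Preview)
The paper does not prove this proposition at all: it is quoted as a known result with a citation to Macdonald's notes \cite[$(4.15'')$]{Mac}, and is used only as a tool (to compare dimensions in \S\ref{monkcase} and as the base case in \S\ref{generalcase}). So there is no ``paper's own proof'' to compare your proposal against.

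That said, your plan as written has a logical wrinkle worth flagging. You choose \emph{one} descent $i$ of $w$, apply $\der_i$ to both sides, and hope to ``thereby deduce the identity for $w$'' from the identity for $u=ws_i$. But $\der_i$ is not injective: $\der_i(\mathrm{LHS})=\der_i(\mathrm{RHS})$ for a single $i$ does not give $\mathrm{LHS}=\mathrm{RHS}$. The correct statement, which you allude to later as ``leading-term comparison against the Schubert basis'', is: if $D$ is homogeneous of positive degree and $\der_iD=0$ for \emph{all} $i$, then $D=0$ (write $D=\sum_v c_v\schub_v$ with $\ell(v)>0$; any $v$ with $c_v\neq 0$ has a descent $j$, and then $\schub_{vs_j}$ appears in $\der_jD$ with coefficient $c_v$). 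So your induction must verify $\der_iD=0$ for every $i$, including the non-descents of $w$ and the boundary case $i=\nu$; the bijection $(p,q)\mapsto\{s_i(p),s_i(q)\}$ and the Leibniz correction $\delta_{i\nu}\,s_i\schub_w$ have to be analysed in each of these cases, not just for a chosen descent. Once this is organised, the divided-difference induction does go through, and the alternative routes you mention (nilCoxeter/Fomin--Stanley, BJS compatible sequences) are also standard and valid --- but none of them is what the present paper does, since the paper simply imports the result.
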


Schubert polynomials also satisfy the following Cauchy identity: 
\begin{prop}[{{\cite[$(5.10)$]{Mac}}}]
$\sum_{w \in S_n} \schub_w(x) \schub_{ww_0}(y) = \prod_{i+j \leq n} (x_i+y_j)$. 
\end{prop}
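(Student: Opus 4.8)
The plan is to derive this Cauchy identity from the combinatorial description of Schubert polynomials together with the divided-difference recursion, working by downward induction on the length of the permutation indexing the $y$-variable. Write $\Phi(x,y) = \prod_{i+j \leq n}(x_i + y_j)$ and $\Psi(x,y) = \sum_{w \in S_n} \schub_w(x)\,\schub_{ww_0}(y)$; I want to show $\Phi = \Psi$. First I would verify the identity for the ``top'' term: when we isolate the contribution of $w = w_0$ on the right, we get $\schub_{w_0}(x)\,\schub_{e}(y) = x_1^{n-1}x_2^{n-2}\cdots x_{n-1}$, since $\schub_{e} = 1$ and $\schub_{w_0}$ is the staircase monomial. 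The strategy is then to apply a divided difference $\der_i^{(y)}$ in the $y$-variables to both sides and match terms, peeling off permutations one reflection at a time, so that the base case supports an induction that fills in all of $S_n$.

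The key computation is the behaviour of $\Phi$ under $\der_i^{(y)}$. Observe that $\prod_{i+j \leq n}(x_i + y_j) = \prod_{j=1}^{n-1}\bigl(\prod_{i=1}^{n-j}(x_i + y_j)\bigr)$, so the variable $y_i$ appears only in the factor $P_i(y_i) := \prod_{a=1}^{n-i}(x_a + y_i)$ and the variable $y_{i+1}$ only in $P_{i+1}(y_{i+1}) := \prod_{a=1}^{n-i-1}(x_a+y_{i+1})$; since $P_i$ and $P_{i+1}$ differ exactly by the single factor $(x_{n-i} + y_i)$ after swapping $y_i \leftrightarrow y_{i+1}$, one computes directly that $\der_i^{(y)}\Phi = \Phi' \cdot \der_i^{(y)}\bigl(P_i(y_i)P_{i+1}(y_{i+1})\bigr)$ where $\Phi'$ collects the remaining factors, and $\der_i^{(y)}\bigl(P_i(y_i)P_{i+1}(y_{i+1})\bigr)$ evaluates to $\prod_{a=1}^{n-i-1}(x_a+y_i)(x_a+y_{i+1})$. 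Hence $\der_i^{(y)}\Phi$ is again a product of the same shape but with the role of index $n-i$ suppressed; more precisely it should match $\prod_{i+j \leq n-1}$-type factors after relabelling, which is exactly the structure needed to run the induction. On the right-hand side, using the Leibniz-type rule $\der_i^{(y)}\bigl(\schub_{ww_0}(y)\bigr)$ and the fact that $\der_i \schub_u = \schub_{us_i}$ when $\ell(us_i) < \ell(u)$ and $0$ otherwise, applying $\der_i^{(y)}$ to $\Psi$ sends $\sum_w \schub_w(x)\schub_{ww_0}(y)$ to $\sum_w \schub_w(x)\,\der_i^{(y)}\schub_{ww_0}(y)$, and one reindexes via $w \mapsto ws_i$ (noting $ww_0 \cdot s_i = w s_i w_0$ up to the standard identity relating $s_i$ and $w_0$). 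The combinatorics of this reindexing, matched against the $x$-side behaviour, is what forces $\der_i^{(y)}\Psi$ to satisfy the same recursion as $\der_i^{(y)}\Phi$.

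To make the induction precise I would proceed as follows. Both $\Phi$ and $\Psi$ lie in $H_n \otimes \ZZ[x_1,\dots,x_{n-1}]$ in the $y$-variables (each $y_j$ appears to degree at most $n - j \leq n - 1 - (j-1)$), and the Schubert polynomials $\{\schub_v(y) : v \in S_n\}$ form a $\ZZ[x]$-basis of this space by the cited fact. So it suffices to show that for every $v \in S_n$ the coefficient of $\schub_v(y)$ in $\Phi$ equals that in $\Psi$, namely $\schub_{vw_0}(x)$. Equivalently, applying a reduced word $\der_{i_1}^{(y)}\cdots\der_{i_\ell}^{(y)}$ for $v^{-1}w_0$ (or the appropriate element) to both sides and then setting $y = 0$ extracts these coefficients; the base case $v = w_0$ (top of the induction, handled above) together with the recursion for $\der_i^{(y)}$ established in the previous paragraph propagates the equality down to all $v \in S_n$ by induction on $\ell(vw_0) = \binom{n}{2} - \ell(v)$. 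The main obstacle I anticipate is the bookkeeping in the $y$-side reindexing: one must carefully track how $\der_i$ acts on $\schub_{ww_0}$ — in particular distinguishing the case $\ell(ws_i w_0) < \ell(ww_0)$ from the vanishing case — and confirm that summing over $w \in S_n$ and pulling through the $w \mapsto ws_i$ substitution yields precisely the same ``staircase-with-one-layer-removed'' product that the left-hand computation produces, with no sign errors or missing boundary terms. Once that matching is verified, the identity follows.
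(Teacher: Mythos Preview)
The paper does not give a proof of this proposition at all; it is simply quoted from Macdonald's \emph{Notes on Schubert Polynomials} (identity (5.10) there) and used later as a tool in the proof of Lemma~4.2. There is therefore no argument in the paper against which to compare your proposal.

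As a standalone assessment: your final-paragraph strategy---observe that both sides lie in $H_n$ as polynomials in $y$ with coefficients in $\ZZ[x]$, then extract the coefficient of each $\schub_v(y)$ by applying a reduced word of divided differences in $y$ and setting $y=0$---is correct and is essentially Macdonald's approach. One claim in the middle paragraph should be repaired, however: $\der_i^{(y)}\Phi$ is \emph{not} ``$\prod_{i+j\leq n-1}$-type factors after relabelling.'' Your own computation shows that $\der_i^{(y)}\Phi$ equals $\Phi$ with the single factor $(x_{n-i}+y_i)$ deleted, which is not a smaller staircase product, so there is no induction on $n$ here. The induction is on $\ell(v)$ (equivalently on the number of factors stripped away), and after several applications of $\der_{i_k}^{(y)}$ the intermediate products are no longer of staircase shape; one must track precisely which factors remain. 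Once you do that bookkeeping---and verify that setting $y=0$ in the stripped product yields exactly $\schub_{vw_0}(x)$, which is the heart of the matter and is where Macdonald's treatment does the real work---the argument goes through.
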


Let $K$ be a field of characteristic zero. 
Let $\borel=\borel_n$ be the Lie algebra of all upper triangular $K$-matrices. 
and let $\csa \subset \borel$ and $\nplus \subset \borel$ be the subalgebra of all diagonal matrices
and the subalgebra of all strictly upper triangular matrices respectively. 
Let $\ualg(\borel)$ and $\ualg(\nplus)$ be the universal enveloping algebras of $\borel$ and $\nplus$ respectively. 
For a $\ualg(\borel)$-module $M$ and $\lmb = (\lmb_1, \ldots, \lmb_n) \in \ZZ^n$, 
let $M_\lmb = \{m \in M : hm=\langle \lmb,h \rangle m \;\text{($\forall h \in \csa$)}\}$ where $\langle \lmb,h \rangle = \sum \lmb_i h_i$. 
If $M_\lmb \neq 0$ then $\lmb$ is said to be a \textit{weight} of $M$. 
If $M=\bigoplus_{\lmb \in \ZZ^n} M_\lmb$ and each $M_\lmb$ has a finite dimension, 
then $M$ is said to be a \textit{weight $\borel$-module}
and we define $\ch(M)=\sum_{\lmb} \dim M_\lmb x^\lmb$ where $x^\lmb=x_1^{\lmb_1} \cdots x_n^{\lmb_n}$. 
From here we only consider weight $\borel$-modules, and write $\Ext^i$ to mean $\Ext$ functors in the category of all weight $\borel$-modules.  
%Let $\nplus \in \borel$ be the Lie subalgebra of all strictly upper triangular matrices $(a_{ij})_{\substack{i,j \in \NN \\ i<j}}$.
For $1 \leq i<j \leq n$, let $\eij \in \borel$ be the matrix with $1$ at the $(i,j)$-position and all other coordinates $0$. 

For $\lmb \in \ZZ^n$, let $K_\lmb$ denote the one-dimensional $\ualg(\borel)$-module where $h \in \csa$ acts by $\langle \lmb,h \rangle$ and $e_{ij}$ acts by $0$. 
Note that every finite-dimensional weight $\borel$-module admits a filtration by these one dimensional modules. 

In \cite{KP}, Kra\'skiewicz and Pragacz defined certain $\ualg(\borel)$-modules which here we call \textit{Kra\'skiewicz-Pragacz modules} or \textit{KP modules}. 
Here we use the following definition. 
Let $w \in S_\infty^{(n)}$. 
Let $K^n=\bigoplus_{1 \leq i \leq n} K u_i$ be the vector representation of $\borel$. 
For each $j \in \NN$, let $\{i < j : w(i)>w(j)\}=\{i_1, \ldots, i_{l_j}\}$ ($i_1<\cdots<i_{l_j}$), 
and let $u_w^{(j)}=u_{i_1} \wedge \cdots \wedge u_{i_{l_j}} \in \bigwedge^{l_j} K^n$. 
Let $u_w=u_w^{(1)} \otimes u_w^{(2)} \otimes \cdots \in \bigotimes_{j \in \NN} \bigwedge^{l_j} K^n$. 
Then the KP module $\smod_w$ associated to $w$ is defined as $\smod_w=\ualg(\borel)u_w=\ualg(\nplus)u_w$. 

\begin{eg}
If $w=s_i$, then $u_w=u_i$ and thus we have $\smod_w=K^i := \bigoplus_{1 \leq j \leq i} Ku_j$. 
\end{eg}

KP modules satisfy the following: 
\begin{thm}[{{\cite[Remark 1.6, Theorem 4.1]{KP}}}]
For any $w \in S_\infty^{(n)}$, $\smod_w$ is a weight $\borel$-module and $\ch(\smod_w)=\schub_w$. 
\end{thm}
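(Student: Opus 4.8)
The two claims are established separately, the first being essentially formal. Because $w$ fixes all but finitely many points, $\ell(w)=\sum_j l_j$ is finite, so all but finitely many $l_j$ vanish and $\smod_w$ lies inside the honest finite tensor product $V:=\bigotimes_j\bigwedge^{l_j}K^n$. Since $K^n$ is a weight module (with $u_i$ of weight $\epsilon_i$), so is each $\bigwedge^{l_j}K^n$ and so is $V$, and all its weight spaces are finite-dimensional; submodules of weight modules are again weight modules, so $\smod_w$ is one. Moreover $\smod_w$ is finite-dimensional: each generator $e_{ab}$ ($a<b$) of $\nplus$ sends a weight $\lambda$ to $\lambda-\epsilon_b+\epsilon_a$, hence preserves the total degree $\sum_i\lambda_i=\ell(w)$ while raising $\lambda$ by a positive root, and $V$ has only finitely many weights of degree $\ell(w)$. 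Thus $\ch\smod_w\in\nonneg[x_1,\dots,x_n]$ is a well-defined polynomial, homogeneous of degree $\ell(w)$, and $u_w$ spans the weight space $(\smod_w)_{\inv(w)}$.

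To prove $\ch\smod_w=\schub_w$, induct on $\ell(w^{\downarrow})-\ell(w)$, where $w^{\downarrow}\in S_\infty^{(n)}$ is the permutation with the same value-set $\{w^{\downarrow}(1),\dots,w^{\downarrow}(n)\}=\{w(1),\dots,w(n)\}$ satisfying $w^{\downarrow}(1)>\cdots>w^{\downarrow}(n)$ --- the unique longest permutation in $S_\infty^{(n)}$ with that value-set. When this quantity is $0$ we have $w=w^{\downarrow}$, so $\schub_w=x_1^{w(1)-1}\cdots x_n^{w(n)-1}$ by definition; and then each $u_w^{(j)}$ is a prefix $u_1\wedge u_2\wedge\cdots\wedge u_{k_j}$ (for $j\le n$ it is $u_1\wedge\cdots\wedge u_{j-1}$, and for $j>n$ the set $\{i\le n:w(i)>w(j)\}$ is an initial segment of $\{1,\dots,n\}$ since $w$ is decreasing there), every prefix is killed by $\nplus$ (each $e_{ab}$, $a<b$, either repeats a wedge factor or acts by $0$), so $\smod_w=\ualg(\nplus)u_w=Ku_w$ is one-dimensional, and since a short computation gives $\inv(w)=(w(1)-1,\dots,w(n)-1)$ for such $w$ we get $\ch\smod_w=\schub_w$. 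Otherwise $w$ is not decreasing, so pick $i\le n-1$ with $w(i)<w(i+1)$ and put $v:=ws_i$; then $v$ has the same value-set as $w$ (so $v^{\downarrow}=w^{\downarrow}$), $\ell(v)=\ell(w)+1$, and $v$ has the descent $v(i)>v(i+1)$, so by induction $\ch\smod_v=\schub_v$. As $\schub_w=\schub_{vs_i}=\der_i\schub_v$, it remains to prove the module-theoretic counterpart: if $v\in S_\infty^{(n)}$ has $v(i)>v(i+1)$, then $\ch\smod_{vs_i}=\der_i\ch\smod_v$.

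The proof of this counterpart is the real work. The ambient space $V_v=\bigotimes_j\bigwedge^{l_j(v)}K^n$ is not just a $\borel$-module but a $\mathfrak{gl}_n$-module, in particular a module over the minimal parabolic $\mathfrak p_i:=\borel+Ke_{i+1,i}$. Let $\widehat{\smod}_v:=\ualg(\mathfrak p_i)\smod_v\subseteq V_v$, a finite-dimensional $\mathfrak p_i$-module which by construction contains $\smod_v$ as a $\borel$-submodule. I would establish (a) $\ch\widehat{\smod}_v=\pi_i(\ch\smod_v)$, where $\pi_i f=\der_i(x_i f)=f+x_{i+1}\der_i f$ is the Demazure operator --- a Demazure-type character computation, available here because $\smod_v$ is generated over $\borel$ by the single extremal-weight vector $u_v$ and $V_v$ is completely reducible over $\mathfrak{gl}_n$ --- and (b) a short exact sequence of $\borel$-modules $0\to\smod_v\to\widehat{\smod}_v\to\smod_{vs_i}\otimes K_{\epsilon_{i+1}}\to 0$. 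Comparing characters then yields $\ch\smod_v+x_{i+1}\ch\smod_{vs_i}=\ch\smod_v+x_{i+1}\der_i\ch\smod_v$, hence $\ch\smod_{vs_i}=\der_i\ch\smod_v$, as desired. Both (a) and (b) rest on making the structure of $\ualg(\mathfrak p_i)\smod_v$ explicit; the handle on this is that passing from $v$ to $vs_i$ changes the defining inversion sets of the two permutations only in positions $j\in\{i,i+1\}$ and positions $j$ with $v(i+1)<v(j)<v(i)$, which lets one write down an explicit $\borel$-equivariant surjection $\widehat{\smod}_v\twoheadrightarrow\smod_{vs_i}\otimes K_{\epsilon_{i+1}}$ and recognize its kernel as $\smod_v$.

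The main obstacle is exactly this last analysis --- pinning down $\ualg(\mathfrak p_i)\smod_v$ precisely enough to produce the short exact sequence (b) and the character formula (a); the weight-module claim, the base case, the reduction, and the final character bookkeeping are all routine. (A technically different route, avoiding the parabolic, would be to construct a basis of $\smod_w$ indexed by the combinatorial objects occurring in the Billey--Jockusch--Stanley monomial expansion of $\schub_w$ --- reduced words equipped with compatible sequences, equivalently reduced pipe dreams for $w$ --- so that $\ch\smod_w=\schub_w$ becomes a weight-preserving bijection of bases; the difficulty would then shift to proving linear independence of the natural spanning set.)
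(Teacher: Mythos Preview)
The paper does not prove this theorem; it is simply cited from Kra\'skiewicz and Pragacz \cite{KP}, so there is no in-house argument to compare against beyond the reference itself. Your sketch follows exactly the strategy of \cite{KP}: establish the weight-module claim and the dominant base case directly, then induct via the divided-difference recursion by passing to the minimal parabolic $\mathfrak p_i$ and invoking a Demazure-type character identity together with a short exact sequence relating $\smod_v$, its $\mathfrak p_i$-saturation, and $\smod_{vs_i}$. The weight-module part, the base case, and the algebraic bookkeeping leading from (a) and (b) to $\ch\smod_{vs_i}=\der_i\ch\smod_v$ are all correct as written. What remains is what you yourself flag as ``the main obstacle'': you do not actually prove (a) or (b), and these are the substance of the argument in \cite{KP}. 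Verifying the exact sequence (b) requires a careful analysis of how the columns $u_v^{(j)}$ and $u_{vs_i}^{(j)}$ differ (they disagree at $j\in\{i,i+1\}$ and at those $j$ with $v(i+1)<v(j)<v(i)$), and then producing the explicit surjection and identifying its kernel; the Demazure identity (a) needs the fact that $\smod_v$ is a cyclic $\borel$-module generated in an extremal weight inside a $\mathfrak{gl}_n$-module. So your proposal is architecturally correct and matches the cited source, but as it stands it is an outline rather than a proof.
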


Note that, in the notation of the definition above, 
for $1 \leq p < q \leq n$, the number of $j \in \NN$ such that $e_{pq}u_w^{(j)} \neq 0$
is given by $m_{pq}(w)=\#\{r>q : w(p)<w(r)<w(q)\}$. 
In particular, $e_{pq}^{m_{pq}(w)+1}u_w=0$. 

In the author's previous paper \cite{W} we showed the following criterion for a module to have a filtration by KP modules, 
using the theory of highest-weight categories: 

\begin{thm}[{{\cite[Corollary 6.6, Theorem 7.1]{W}}}]
%Let $\Lmb^{(n)}=\{(a_1, \ldots, a_n) \in \ZZ^n : 0 \leq a_i \leq n-i\}$. 
%Let $M$ be a finite-dimensional weight $\borel$-module whose weight are all in $\Lmb^{(n)}$. 
Let $M$ be a finite-dimensional weight $\borel$-module whose weight are all in $\nonneg^n$. 
Then the followings are equivalent: 
\begin{enumerate}
\item $M$ has a filtration such that each of its subquotients is isomorphic to some $\smod_w$ $(w \in S_\infty^{(n)})$. 
In such case we say that $M$ has a \textit{KP filtration} hereafter. 
\item $\Ext^i(M, \smod_w^* \otimes K_{\rho+k\one})=0$ for all $i \geq 1$, $w \in S_\infty^{(n)}$ and $k \in \ZZ$, 
where $\rho = (n-1, n-2, \ldots, 0)$ and $\one=(1, 1, \ldots, 1)$. 
\item $\Ext^1(M, \smod_w^* \otimes K_{\rho+k\one})=0$ for all $w \in S_\infty^{(n)}$ and $k \in \ZZ$, 
where $\rho = (n-1, n-2, \ldots, 0)$ and $\one=(1, 1, \ldots, 1)$. 
\end{enumerate}
\end{thm}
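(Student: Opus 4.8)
The plan is to recognise the category in which $M$ lives as (a finite truncation of) a highest-weight category in the sense of Cline--Parshall--Scott, having the KP modules as its standard objects, and then to read off the asserted equivalence from the classical homological characterisation of modules with a filtration by standard objects. Since $M$ is finite dimensional, all of its weights lie in the finite set $\Lambda^{(k)}:=\{\mu\in\nonneg^n:\mu_i\le n-i+k \text{ for all }i\}$ once $k$ is large, and I would work in the Serre subcategory $\catc_{\Lambda^{(k)}}$ of weight $\borel$-modules all of whose weights lie in $\Lambda^{(k)}$, taking $k$ large enough that $\catc_{\Lambda^{(k)}}$ also contains the relevant $\smod_w$'s. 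Its simple objects are the $K_\mu$ $(\mu\in\Lambda^{(k)})$, and since $\borel$ preserves the total degree $\sum_i\mu_i$, this category --- and every $\Ext$-computation below --- splits as a direct sum over degrees, matching the homogeneity of Schubert polynomials.

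Next I would install the quasi-hereditary structure. Writing $\smod_w=\ualg(\nplus)u_w$ with $u_w$ of weight $\inv(w)$, and using $\ch\smod_w=\schub_w=x^{\inv(w)}+(\text{monomials }x^\nu \text{ with }\nu\rtgeq\inv(w),\ \nu\ne\inv(w))$, one sees that $\smod_w$ is indecomposable with simple top $K_{\inv(w)}$, its remaining composition factors being $K_\nu$ with $\nu\rtgeq\inv(w)$, $\nu\ne\inv(w)$ (all in degree $\ell(w)$). As $w\mapsto\inv(w)$ is a bijection $S_\infty^{(n)}\to\nonneg^n$, this exhibits the $\smod_w$ with $\inv(w)\in\Lambda^{(k)}$ as the natural candidates for the standard objects of $\catc_{\Lambda^{(k)}}$ ordered by reverse dominance (so $\mu\le\lambda$ iff $\lambda\rtgeq\mu$). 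To prove that $\catc_{\Lambda^{(k)}}$ is indeed a highest-weight category with these standards I would verify the Dlab--Ringel axioms for a standardisable collection: $\mathrm{End}(\smod_w)=K$ and $\Hom(\smod_w,\smod_v)=0$ unless $\inv(w)\rtgeq\inv(v)$ --- both immediate from the simple-top property and the weight structure --- together with the vanishing $\Ext^1_{\catc_{\Lambda^{(k)}}}(\smod_w,\smod_v)=0$ whenever $\inv(w)\rtngeq\inv(v)$ or $w=v$. Equivalently, by BGG reciprocity, it suffices to show that the truncated ``Verma-type'' projective covers $P(\mu)=\big(\ualg(\nplus)\otimes K_\mu\big)/(\text{weight spaces outside }\Lambda^{(k)})$ carry KP filtrations. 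This step is the technical heart, and I expect it to be the main obstacle: it amounts to lifting to the module level the combinatorial identity $\ch P(\mu)=\big(\text{truncation of }x^\mu\prod_{i<j}(1-x_i/x_j)^{-1}\big)=\sum_w c_w\,\schub_w$ with all $c_w\ge 0$. I would attack it through the explicit realisation $\smod_w\subseteq\bigotimes_j\bigwedge^{l_j}K^n$, the relations $e_{pq}^{\,m_{pq}(w)+1}u_w=0$, and a transition-type recursion modelled on Monk's formula.

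Granting the highest-weight structure, the rest is formal. On $\catc_{\Lambda^{(k)}}$ the functor $N\mapsto N^*\otimes K_{\rho+k\one}$ is a contravariant self-equivalence, since $\Lambda^{(k)}$ is stable under the involution $\mu\mapsto\rho+k\one-\mu$, which reverses dominance and hence preserves the highest-weight order; it therefore carries the standard object $\smod_w$ (for $\inv(w)\in\Lambda^{(k)}$) to the costandard object $\smod_w^*\otimes K_{\rho+k\one}\cong\nabla(\rho+k\one-\inv(w))$. Since the $\Lambda^{(k)}$ exhaust $\nonneg^n$ as $k$ grows, the modules $\smod_w^*\otimes K_{\rho+k\one}$ run, over all $w$ and all $k\in\ZZ$, through the costandard objects of all the truncations $\catc_{\Lambda^{(k)}}$ (the few pairs $(w,k)$ not arising this way contributing zero $\Ext$ by a degree argument and the $\Delta$--$\nabla$ orthogonality), and $\Ext$ into a costandard object agrees whether computed in $\catc_{\Lambda^{(k)}}$ or in the ambient category. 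The Cline--Parshall--Scott theorem, applied in each $\catc_{\Lambda^{(k)}}$ with $k$ large relative to $M$ --- a module has a filtration by standard objects if and only if all of its higher $\Ext$-groups, equivalently its $\Ext^1$-group, into costandard objects vanish --- then translates, once one recalls that ``filtration by standard objects'' here is precisely ``KP filtration'' (the standard objects being the KP modules, which do not depend on $k$ once they lie in the truncation), into the equivalence of $(1)$, $(2)$ and $(3)$.
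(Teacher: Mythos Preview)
This theorem is not proved in the present paper at all: it is quoted verbatim from the author's earlier work \cite{W} (Corollary~6.6 and Theorem~7.1 there), and the only indication given here is the phrase ``using the theory of highest-weight categories''. So there is no proof in this paper to compare your proposal against.

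That said, your sketch is precisely the highest-weight-category strategy alluded to, and its architecture is sound: realise a suitable truncation $\catc_{\Lambda^{(k)}}$ as a highest-weight category with standards $\smod_w$ indexed via $w\mapsto\inv(w)$, identify the costandards as $\smod_w^*\otimes K_{\rho+k\one}$ through the weight-reversing duality $N\mapsto N^*\otimes K_{\rho+k\one}$, and then invoke the Cline--Parshall--Scott/Dlab--Ringel characterisation of $\Delta$-filtered modules by $\Ext$-vanishing into costandards. Your identification of the one genuinely hard step---establishing the quasi-hereditary structure, e.g.\ by showing that the truncated projectives $P(\mu)$ admit KP filtrations---is accurate; in \cite{W} this is exactly where the work lies, and it is carried out there by an explicit analysis of the modules $\smod_w$ and their $\Ext^1$-groups (the two bullet points about $(\smod_x)_{\inv(y)}$ and $\Ext^1(\smod_x,K_{\inv(y)})$ recalled later in this paper are part of that machinery). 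One small terminological wrinkle: what you call ``reverse dominance'' is really the reverse root order, but since all weights of a fixed $\smod_w$ have the same coordinate sum $\ell(w)$, the two coincide on each graded piece, so no harm is done.
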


From this, we obtain the following corollary. 

\begin{cor}
\begin{enumerate}
\item If $M=M_1 \oplus \cdots \oplus M_r$, then $M$ has a KP filtration if and only if each $M_i$ has. 
\item If $0 \mor L \mor M \mor N \mor 0$ is exact and $M$ and $N$ have KP filtrations, 
then $L$ also has a KP filtration. 
\end{enumerate}
\label{filtrlem}
\end{cor}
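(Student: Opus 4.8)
The plan is to derive both parts directly from the $\Ext$-criterion stated in the previous theorem, together with the formal homological properties of $\Ext$-functors in the category of weight $\borel$-modules. Throughout I will abbreviate $X_{w,k} = \smod_w^* \otimes K_{\rho+k\one}$, so that the criterion reads: a finite-dimensional weight $\borel$-module whose weights lie in $\nonneg^n$ has a KP filtration if and only if $\Ext^1(-,X_{w,k})$ vanishes on it for all $w \in S_\infty^{(n)}$ and $k \in \ZZ$, equivalently $\Ext^i(-,X_{w,k})=0$ for all $i \geq 1$ and all such $w,k$.

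For (1), I would first observe that the class of finite-dimensional weight modules with weights in $\nonneg^n$ is closed under finite direct sums and under passing to direct summands, so the theorem applies to $M$ and to each $M_i$ simultaneously. Since $\Ext^i(-,X_{w,k})$ is additive in its first argument, $\Ext^i(M,X_{w,k}) \cong \bigoplus_i \Ext^i(M_i,X_{w,k})$; this direct sum vanishes (for all $i \geq 1$, $w$, $k$) precisely when each summand does, and the theorem translates this into the asserted equivalence of KP-filtration statements.

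For (2), the key step is to apply $\Hom(-,X_{w,k})$ to the short exact sequence $0 \mor L \mor M \mor N \mor 0$ and read off the long exact $\Ext$-sequence
\[
\cdots \mor \Ext^i(M,X_{w,k}) \mor \Ext^i(L,X_{w,k}) \mor \Ext^{i+1}(N,X_{w,k}) \mor \cdots .
\]
For each $i \geq 1$ the outer terms vanish --- the first because $M$ has a KP filtration and the third because $N$ has one (here $i+1 \geq 2 \geq 1$) --- forcing $\Ext^i(L,X_{w,k})=0$ for all $i \geq 1$, $w$, $k$. Before invoking the theorem for $L$ I would note that $L$, being a submodule of the finite-dimensional $M$, is finite-dimensional and has weights among those of $M$, hence in $\nonneg^n$; then the implication ``$\Ext^1$-vanishing $\Rightarrow$ KP filtration'' gives the conclusion.

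I do not anticipate a genuine obstacle: once the $\Ext$-characterization is in hand, both statements are formal, relying only on additivity of $\Ext$ in the first variable and the existence of the long exact sequence. The only points needing a moment's attention are the routine checks that the auxiliary modules (the summands $M_i$ in (1), the subobject $L$ in (2)) still satisfy the finiteness-and-positivity hypotheses required to apply the theorem.
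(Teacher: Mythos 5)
Your argument is correct and follows essentially the same route as the paper: part (1) via additivity of $\Ext^1$ in the first variable, and part (2) via the segment $\Ext^1(M,A) \mor \Ext^1(L,A) \mor \Ext^2(N,A)$ of the long exact sequence together with the $\Ext$-vanishing characterization. Your extra remarks about checking that the $M_i$ and $L$ remain finite-dimensional with weights in $\nonneg^n$ are sound housekeeping that the paper leaves implicit.
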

\begin{proof}
(1) is easy since $\Ext^1(M, N)=\bigoplus \Ext^1(M_i,N)$ for any $N$. 
(2) also follows from the previous theorem, since if $0 \mor L \mor M \mor N \mor 0$ is an exact sequence 
then we have an exact sequence $\Ext^1(M, A) \mor \Ext^1(L, A) \mor \Ext^2(N, A)$ for any $A$. 
\end{proof}

\section{Monk's formula and KP filtration}
\label{monkcase}
In this and the next section we show that the tensor product of KP modules have a KP filtration. 
In this section we deal with the special case that one of the KP modules is $\smod_{s_\nu}$, 
%This corresponds to Monk's formula for Schubert polynomial which describes the expansion of product $\schub_w \schub_{s_\nu}$ into a sum of Schubert polynomials. 
which corresponds to the Monk's formula (Proposition \ref{polymonk}). 
\begin{prop}
Let $w \in S_\infty^{(n)}$ and let $1 \leq \nu \leq n-1$. 
Then $\smod_w \otimes \smod_{s_\nu}$ has a KP filtration. 
\label{monk}
\end{prop}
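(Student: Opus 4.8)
The plan is to use the $\Ext^1$-vanishing criterion (Theorem from \cite{W}, part (3)): it suffices to show $\Ext^1(\smod_w \otimes \smod_{s_\nu}, \smod_u^* \otimes K_{\rho+k\one})=0$ for all $u \in S_\infty^{(n)}$ and $k \in \ZZ$. Since $\smod_{s_\nu}=K^\nu$, I would first look for a short exact sequence that relates $\smod_w \otimes K^\nu$ to modules already known to have KP filtrations, and then feed that sequence into Corollary \ref{filtrlem}(2). The natural candidate is the inclusion $K^{\nu-1} \inj K^\nu$ (or rather $\smod_w \otimes K^{\nu-1} \inj \smod_w \otimes K^\nu$), whose cokernel is $\smod_w \otimes (K^\nu/K^{\nu-1}) \cong \smod_w \otimes K_{\epsilon_\nu}$, a weight-shift of $\smod_w$ and hence trivially KP-filtered. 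Inducting on $\nu$ (base case $\nu=1$: $\smod_w \otimes K^1 = \smod_w \otimes K_{\epsilon_1} \cong \smod_w$ up to twist, again KP-filtered), this reduces everything to showing that the \emph{extension} in the middle splits off enough KP structure — but Corollary \ref{filtrlem}(2) goes the wrong way (it passes KP filtrations to submodules, not quotients or middle terms), so a direct dévissage on this sequence will not close the argument by itself.

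Instead I expect the real proof must exhibit the KP filtration of $\smod_w \otimes K^\nu$ explicitly, matching the Monk's-formula expansion $\schub_w \schub_{s_\nu} = \sum_{p \le \nu < q,\ \ell(wt_{pq})=\ell(w)+1} \schub_{wt_{pq}}$. So the key step is to construct, inside $\smod_w \otimes K^\nu$, a filtration
\[
0 = N_0 \subset N_1 \subset \cdots \subset N_m = \smod_w \otimes K^\nu
\]
whose successive quotients are the $\smod_{wt_{pq}}$ appearing in Monk's formula. The generator $u_w \otimes u_p$ (for $p \le \nu$) should map to $u_{wt_{pq}}$ for the appropriate $q$, or to something congruent to it modulo the span of generators with ``smaller'' data; one orders the pairs $(p,q)$ suitably (e.g. by $p$, or by $q$, compatibly with a lexicographic order on weights) and checks that each $\ualg(\nplus)(u_w \otimes u_p)$ modulo the previous stage is a quotient — hence, by a dimension/character count using $\ch(\smod_w\otimes K^\nu)=\schub_w\schub_{s_\nu}$, actually isomorphic — to $\smod_{wt_{pq}}$.

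The main obstacle, and where the bulk of the work lies, is the \textbf{combinatorial identification of the subquotients}: one must verify that the $\borel$-submodule generated by the image of $u_w \otimes u_p$ in the appropriate subquotient is isomorphic to $\smod_{wt_{pq}}$ and not something larger or a non-split extension. This requires understanding how $\nplus$ acts on $u_w \otimes u_p$ — in particular identifying which $e_{ab}$ kill it and which move it to other basis vectors — and comparing with the defining relations of $\smod_{wt_{pq}}$, using facts like $e_{ab}^{m_{ab}(w)+1}u_w = 0$. A clean way to make the counting rigorous is: build the filtration so that each $N_i/N_{i-1}$ is a \emph{quotient} of some $\smod_{wt_{pq}}$ (easy, by the universal-type property that $\smod_v$ is generated by $u_v$ with the stated vanishing relations), then observe $\sum_i \ch(N_i/N_{i-1}) \preceq \sum \schub_{wt_{pq}} = \schub_w\schub_{s_\nu} = \ch(\smod_w \otimes K^\nu) = \sum_i \ch(N_i/N_{i-1})$, forcing equality termwise and hence each quotient map to be an isomorphism. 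The delicate point throughout is choosing the order on the pairs $(p,q)$ so that ``lower'' terms in the filtration genuinely absorb the correction terms; I anticipate this is handled by a lexicographic order on the weights $\wt(u_w \otimes u_p) = \inv(w) + \epsilon_p$ together with the observation that $\ell(wt_{pq})=\ell(w)+1$ pins down $q$ given $p$.
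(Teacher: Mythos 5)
Your overall skeleton — build an explicit filtration indexed by the Monk pairs $(p,q)$, then close the argument with the dimension/character count $\ch(\smod_w\otimes K^\nu)=\schub_w\schub_{s_\nu}=\sum\schub_{wt_{pq}}$ — is indeed the paper's strategy, and your observation that Corollary \ref{filtrlem}(2) ``goes the wrong way'' for a naive dévissage on $K^{\nu-1}\subset K^\nu$ is correct. But the proposal has two genuine gaps in the step you yourself flag as the crux.

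First, the claim that ``$\ell(wt_{pq})=\ell(w)+1$ pins down $q$ given $p$'' is false, so the generating set $\{u_w\otimes u_p : p\le\nu\}$ cannot index the filtration. For instance with $w=132$ and $\nu=1$, both $(1,2)$ and $(1,3)$ lie in $X$. Monk's formula can have more terms than there are values of $p\le\nu$, and the filtration must have one step per pair $(p,q)\in X$, not per $p$. The paper resolves this by using the shifted generators $v_{pq}=(e_{pq}^{m_{pq}(w)}u_w)\otimes u_p$, one for each $(p,q)\in X$; these are genuinely different vectors even when $p$ is fixed (in the example, $v_{12}=(e_{12}u_w)\otimes u_1$ while $v_{13}=u_w\otimes u_1$), and part (1) of Lemma \ref{monklem} shows they generate.

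Second, the direction of your comparison map is the hard one, and it is not ``easy by a universal-type property.'' To exhibit a surjection $\smod_{wt_{pq}}\twoheadrightarrow N_i/N_{i-1}$ you would need to know that every relation satisfied by $u_{wt_{pq}}$ inside $\smod_{wt_{pq}}$ is also satisfied by your generator modulo $N_{i-1}$; but $\smod_v$ is defined as the cyclic submodule $\ualg(\nplus)u_v$ of a tensor of exterior powers, not by a presentation with generators and relations, and the relations $e_{ab}^{m_{ab}(v)+1}u_v=0$ are necessary but far from defining. The paper instead builds surjections in the opposite direction: it constructs explicit $\borel$-maps $\phi_{pq}:\smod_w\otimes\smod_{s_\nu}\to\bigotimes_j\bigwedge^{l_j(wt_{pq})}K^n$ (via splitting and merging of wedge factors) sending $v_{pq}$ to a nonzero multiple of $u_{wt_{pq}}$ and killing the ``later'' generators $v_{p'q'}$. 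This gives $F_i/F_{i+1}\twoheadrightarrow\smod_{wt_{p_iq_i}}$, which combined with the dimension count forces isomorphisms. Without something like these $\phi_{pq}$ — which are the real content of the proof — your argument does not close. Your ordering heuristic (lex order on $\inv(w)+\epsilon_p$) is also not what is used; the paper orders $X$ by the partial order on $(w(p),w(q))$ coming from property (2b) of the lemma.
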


\begin{rmk}
The case $\nu=1$ was proved by Kra\'skiewicz and Pragacz in \cite[Proposition 5.1]{KP}. 
They raised a problem asking the existence of a filtration corresponding to Monk's formula for general case in \cite[Remark 5.2]{KP}. 
The proposition above gives the answer to this problem. 
\end{rmk}

\begin{lem}
Let $X=\{(p,q) : p \leq \nu < q, \ell(wt_{pq})=\ell(w)+1\}$. 
For $(p,q) \in X$, define $v_{pq}=(\epq^{\mpq(w)} u_w) \otimes u_p \in \smod_w \otimes \smod_{s_\nu}$, 
where $\mpq(w)=\#\{r > q : w(p)<w(r)<w(q)\}$ as defined in the previous section. 
Note that $v_{pq}=\epq^{\mpq(w)}(u_w \otimes u_p)=(\text{const.}) \cdot \epq^{\mpq(w)+1}(u_w \otimes u_q)$. 
Then:
\begin{enumerate}
\item $\{ v_{pq} : (p,q) \in X \}$ generates $\smod_w \otimes \smod_{s_\nu}$ as an $\ualg(\borel)$-module. 
\item For each $(p,q) \in X$, there exists a $\borel$-homomorphism $\phi_{pq} : \smod_w \otimes \smod_{s_\nu} \mor
\bigotimes_j \left( \bigwedge^{l_j(wt_{pq})} K^n \right)$ 
where $l_j(wt_{pq})=\#\{i < j : wt_{pq}(i)>wt_{pq}(j)\}$, 
such that: 
\begin{enumerate}
\item $\phi_{pq}(K v_{pq}) = K u_{wt_{pq}}$, and
\item for $(p', q') \in X$, $\phi_{pq}(v_{p'q'}) \neq 0$ implies $w(p') \leq w(p)$ and $w(q') \leq w(q)$. 
\end{enumerate}
\end{enumerate}
\label{monklem}
\end{lem}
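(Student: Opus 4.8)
My plan is to prove Lemma~\ref{monklem} by explicit computation with the generators $u_w$ and the wedge-tensor description of $\smod_w$. For part~(1), the key point is to express an arbitrary basis vector $u_w \otimes u_a$ of $\smod_w \otimes \smod_{s_\nu}$ (where $1 \leq a \leq \nu$, since $\smod_{s_\nu} = K^\nu$) as a $\ualg(\borel)$-combination of the $v_{pq}$. First I would observe that applying $e$'s in $\nplus$ to $u_w \otimes u_q$ produces, on the second tensor factor, vectors $u_p$ with $p \leq \nu < q$; the equality $v_{pq} = (\text{const.})\cdot e_{pq}^{m_{pq}(w)+1}(u_w \otimes u_q)$ from the statement shows each $v_{pq}$ lies in the submodule generated by these, and conversely I would run a downward induction on $\ell(w)$ (or on a suitable partial order on the pairs) to peel off the top terms: the combinatorics here is exactly that underlying Monk's formula, where the condition $\ell(wt_{pq}) = \ell(w)+1$ with $p \leq \nu < q$ enumerates the relevant transpositions. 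Since $\smod_w \otimes \smod_{s_\nu}$ is spanned by $\{n^+\text{-monomials}\}\cdot(u_w \otimes u_a)$, showing each $u_w \otimes u_a$ is reachable suffices.

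For part~(2), I would construct $\phi_{pq}$ factor by factor. The module $\smod_w \otimes \smod_{s_\nu}$ sits inside $\left(\bigotimes_j \bigwedge^{l_j(w)} K^n\right) \otimes K^\nu$, and the target is $\bigotimes_j \bigwedge^{l_j(wt_{pq})} K^n$; since $wt_{pq}$ differs from $w$ by a single transposition raising length by one, the profile $(l_j(wt_{pq}))_j$ differs from $(l_j(w))_j$ by adding a box in exactly one column, say column $j_0$. So I would define $\phi_{pq}$ to be the identity on all factors $j \neq j_0$, and on the $j_0$-th wedge factor tensored with the $K^\nu$ factor, use the wedge multiplication map $\bigwedge^{l_{j_0}(w)} K^n \otimes K^n \to \bigwedge^{l_{j_0}(w)+1} K^n$, $\omega \otimes u \mapsto \omega \wedge u$ — this is a $\borel$-map. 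Then (2a) amounts to checking $\phi_{pq}(v_{pq})$ is a nonzero multiple of $u_{wt_{pq}}$, which is a direct wedge computation: $v_{pq}$ involves $e_{pq}^{m_{pq}(w)}u_w$, whose $j_0$-th factor is the wedge $u_w^{(j_0)}$ with $u_q$ replaced by $u_p$ (up to constant), and wedging with $u_p$ from the second factor recovers the column of $wt_{pq}$. For (2b) I would track which $u_{p'} \in K^\nu$ can survive: $\phi_{pq}(v_{p'q'}) \neq 0$ forces the relevant wedge not to collapse, which pins down inequalities on the row indices appearing in $u_w^{(j_0)}$ after the $e_{p'q'}$-action, and these translate into $w(p') \leq w(p)$ and $w(q') \leq w(q)$ via the explicit description of which indices $i$ have $i < j$, $w(i) > w(j)$.

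**Main obstacle.** The bookkeeping in part~(1) — verifying that the $v_{pq}$ genuinely generate and not merely a proper submodule — is where I expect the real work. One must be careful that the "lower order terms" produced when expressing $u_w \otimes u_a$ in terms of the $v_{pq}$ all lie in the span of $v_{p'q'}$'s with strictly smaller $w(p'), w(q')$ (or smaller in some well-founded order), so that the induction closes; the partial order and the precise form of the triangularity statement have to be chosen so that (2b) and the generation argument are compatible. A secondary subtlety is confirming that column $j_0$ (the column where $wt_{pq}$ gains a box relative to $w$) is well-defined and that the wedge $u_w^{(j_0)} \wedge u_p$ is nonzero exactly when it should be; this requires unwinding the relation between the sets $\{i < j : w(i) > w(j)\}$ for $w$ and for $wt_{pq}$, which is standard but must be done carefully since $p$ and $q$ both change roles.
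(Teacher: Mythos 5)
Your plan for part (2) contains a concrete error that would make the construction of $\phi_{pq}$ fail. You claim that because $wt_{pq}$ covers $w$ (length up by one), the profile $(l_j(wt_{pq}))_j$ differs from $(l_j(w))_j$ by adding a single box in one column $j_0$, and you then define $\phi_{pq}$ as the identity on all other factors together with a single wedge multiplication $\bigwedge^{l_{j_0}(w)} K^n \otimes K^n \to \bigwedge^{l_{j_0}(w)+1} K^n$. This is false in general. Writing $a=l_p(w)=\#\{r<p: w(r)>w(p)\}$ and $b=\#\{r<p: w(r)>w(q)\}$, one has $l_p(wt_{pq})=b\leq a$ and $l_q(wt_{pq})=a+c+1$ where $c=\#\{p<r<q: w(r)>w(q)\}$ and $l_q(w)=b+c$. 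So the profile changes in \emph{two} places whenever $a>b$, i.e.\ whenever there is some $i<p$ with $w(p)<w(i)<w(q)$ (which the covering condition $\ell(wt_{pq})=\ell(w)+1$ does not exclude, since it only forbids such $i$ in the range $p<i<q$). A concrete example is $w=24135$, $(p,q)=(3,4)$: the profile goes from $(0,0,2,1,0)$ to $(0,0,1,3,0)$. Consequently a single wedge multiplication cannot map into the correct target $\bigotimes_j \bigwedge^{l_j(wt_{pq})}K^n$, and your $\phi_{pq}$ is not even well-typed. The paper's construction is genuinely more involved: it leaves the factors $j\neq p,q$ alone, but on the $p$-th and $q$-th factors together with the extra $K^n$ it composes a comultiplication $\bigwedge^{b+c}K^n\to\bigwedge^{b}K^n\otimes\bigwedge^{c}K^n$ with a multiplication $\bigwedge^{a}K^n\otimes\bigwedge^{c}K^n\otimes K^n\to\bigwedge^{a+c+1}K^n$, thereby shrinking the $p$-th factor from $a$ to $b$ while growing the $q$-th factor from $b+c$ to $a+c+1$. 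The key cancellation --- that the only surviving shuffle term is the one with $C'=C$, because $A\supset B$ --- is what makes the resulting $\tilde\phi_{pq}(u_w\otimes u_r)$ have a clean single-term form, and that is what both (2a) and (2b) are read off from.

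Your sketch of part (1) is also not the paper's argument and is too vague to assess on its own terms. You invoke a downward induction on $\ell(w)$ or a triangularity with respect to some unspecified order, but the paper does not use any such induction at this step: it reduces (by an easy induction on the index $p\leq\nu$) to showing $u_w\otimes u_p\in M$ for each $p\leq\nu$, and then proves this \emph{directly} by constructing an explicit chain $p=r_0<r_1<\dots<r_k$ with $w(r_0)<\dots<w(r_k)$, each $w(r_j)$ minimal, stopping once $r_k>\nu$. By construction $(r_{k-1},r_k)\in X$ with $m_{r_{k-1},r_k}(w)=0$, so $v_{r_{k-1},r_k}=u_w\otimes u_{r_{k-1}}$ on the nose, and then $m_{p,r_{k-1}}(w)=0$ gives $e_{p,r_{k-1}}u_w=0$, hence $u_w\otimes u_p = e_{p,r_{k-1}}(u_w\otimes u_{r_{k-1}})\in M$. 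No ``peeling off top terms'' is needed. You also speak of $u_w\otimes u_q$ for $q>\nu$ as if it lay in $\smod_w\otimes\smod_{s_\nu}$; it does not (the identity $v_{pq}=\const\cdot e_{pq}^{m_{pq}(w)+1}(u_w\otimes u_q)$ is a computation in the larger ambient $\smod_w\otimes K^n$), so the informal generation argument built around those vectors does not parse as written.
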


First we see that Proposition \ref{monk} follows from this lemma. 
Index the elements of $X$ as $X=\{ (p_0, q_0), \ldots, (p_{r-1}, q_{r-1}) \}$ 
so that there exists no $i<j$ such that $w(p_j) \leq w(p_i)$ and $w(q_j) \leq w(q_i)$ hold simultaneously. 
For $0 \leq i \leq r$, let $F_i$ be the submodule of $\smod_w \otimes \smod_{s_\nu}$ generated by $v_{p_j,q_j}$ ($j \geq i$). 
Then $F_r=0$ and, by (1) of the lemma, $F_0=\smod_w \otimes \smod_{s_\nu}$. 
Moreover, by (2) of the lemma, 
$\phi_{p_i, q_i}$ induces a surjective morphism $F_i/F_{i+1} \surj \smod_{wt_{p_i, q_i}}$. 
But since by Monk's formula we have
$\dim(\smod_w \otimes \smod_{s_\nu}) = \sum \dim \smod_{wt_{p_i, q_i}}$, 
all of these morphisms must be isomorphisms and hence the theorem follows. 

Let us now prove Lemma \ref{monklem}. 
\begin{proof}
(1): 
Let $M$ be the submodule of $\smod_w \otimes \smod_{s_\nu}$ 
generated by $\{ v_{pq} : (p,q) \in X \}$. 
If we show that $u_w \otimes u_p \in M$ for all $p \leq \nu$, 
then we can show that $xu_w \otimes u_p \in M$ for any $x \in \ualg(\nplus)$ and any $p \leq \nu$ by induction on $p$, 
 since $x(u_w \otimes u_p) \in xu_w \otimes u_p + \smod_w \otimes \left( \bigoplus_{p'<p} Ku_{p'} \right)$. 

Let $p \leq \nu$. We want to show that $u_w \otimes u_p \in M$. 
Define a sequence $r_0, r_1, \ldots, r_k$ by the following algorithm: 
\begin{itemize}
\item Let $r_0=p$. 
\item For $j=1, 2, \ldots$, 
take $r_j$ so that $r_j > r_{j-1}, w(r_j) > w(r_{j-1})$, 
and $w(r_j)$ is as small as possible among such. If $r_j > \nu$, let $k=j$ and stop. 
\end{itemize}
By the construction we have $(r_{k-1},r_{k}) \in X$ and $m_{r_{k-1},r_k}(w)=0$, 
and thus $u_w \otimes u_{r_{k-1}} \in M$. 
If $k=1$ we are done since $r_{k-1}=p$. 
Consider the case $k \geq 2$. 
By the construction, we have $m_{p,r_{k-1}}(w)=0$. 
Thus $e_{p,r_{k-1}}u_w=0$ and $u_w \otimes u_p = e_{p,r_{k-1}}(u_w \otimes u_{r_{k-1}}) \in M$. 
This finishes the proof of (1). 

(2):
Let $(p,q) \in X$. 
Let $a=\#\{r < p : w(r)>w(p)\}=l_p(w), b=\#\{r < p : w(r)>w(q)\}$ and $c=\#\{p<r<q : w(r)>w(q)\}=l_q(w)-b$. 
Then $l_p(wt_{pq})=b$, $l_q(wt_{pq})=a+c+1$ and $l_j(wt_{pq})=l_j(w)$ for $j \neq p,q$. 
Let $\tilde{\phi}_{pq}$ be the $\borel$-homomorphism defined on $\smod_w \otimes K^n$ by the composition of the following morphisms: 
\begin{align*}
\smod_w \otimes K^n &= \bigotimes_j \big( \bigwedge \!{}^{l_j(w)} K^n \big) \otimes K^n \\
&= \bigotimes_{j \neq p,q} \big( \bigwedge \!{}^{l_j(w)} K^n \big) \otimes \bigwedge \!{}^{a} K^n \otimes \bigwedge \!{}^{b+c} K^n \otimes K^n \\
&\mor \bigotimes_{j \neq p,q} \big( \bigwedge \!{}^{l_j(w)} K^n \big) \otimes \bigwedge \!{}^{a} K^n \otimes \bigwedge \!{}^{b} K^n \otimes \bigwedge \!{}^{c} K^n \otimes K^n \\
&= \bigotimes_{j \neq p,q} \big( \bigwedge \!{}^{l_j(w)} K^n \big) \otimes \bigwedge \!{}^{a} K^n \otimes \bigwedge \!{}^{c} K^n \otimes K^n \otimes \bigwedge \!{}^{b} K^n \\
&\mor \bigotimes_{j \neq p,q} \big( \bigwedge \!{}^{l_j(w)} K^n \big) \otimes \bigwedge \!{}^{a+c+1} K^n \otimes \bigwedge \!{}^{b} K^n \\
&= \bigotimes_j \big( \bigwedge \!{}^{l_j(wt_{pq})} K^n \big)
\end{align*}
where the maps $\bigwedge^{b+c} K^n \mor \bigwedge^{b} K^n \otimes \bigwedge^{c} K^n$
and $\bigwedge^{a} K^n \otimes \bigwedge^{c} K^n \otimes K^n \mor \bigwedge^{a+c+1} K^n$ are the natural ones. 
We define $\phi_{pq}$ as the restriction of $\tilde{\phi}_{pq}$ to $\smod_w \otimes \smod_{s_\nu}$. 

For a subset $I = \{i_1 < \cdots < i_a\} \subset \{1, \ldots, n\}$, we write $u_I=u_{i_1} \wedge \cdots \wedge u_{i_a}$. 
Then $u_w=u^{\neq} \otimes u_A \otimes (u_B \wedge u_C)$
where $A=\{r < p : w(r)>w(p)\}$, $B=\{r<p : w(r)>w(q)\}$, $C=\{p<r<q : w(r)>w(q)\}$
and $u^{\neq} = \bigotimes_{j \neq p,q} u_{\{r < j : w(r)>w(j)\}}$. 
Note that $A \supset B$. 
Note also that $u_{wt_{pq}}$ is a multiple of $\epq^{\mpq(w)} (u^{\neq} \otimes (u_A \wedge u_C \wedge u_p) \otimes u_B)$. 

For any $1 \leq r \leq n$, the image of $u_w \otimes u_r \in \smod_w \otimes K^n$ under the morphism $\tilde{\phi}_{pq}$ can be computed as follows: 
\begin{align*}
u_w \otimes u_r &= u^{\neq} \otimes u_A \otimes (u_B \wedge u_C) \otimes u_r \\
&\mapsto u^{\neq} \otimes u_A \otimes \left( \sum_{\substack{|B'|=b, |C'|=c \\ B \sqcup C = B' \sqcup C'}} \pm u_{B'} \otimes u_{C'} \right) \otimes u_r \\
&\mapsto u^{\neq} \otimes \left( \sum_{\substack{|B'|=b, |C'|=c \\ B \sqcup C = B' \sqcup C'}} \pm (u_{A} \wedge u_{C'} \wedge u_r) \otimes u_{B'} \right). 
\end{align*}
But since $A \supset B$, $u_A \wedge u_{C'} \wedge u_r$ vanishes if $C' \cap B \neq \varnothing$. 
So the only term in the last sum which can be nonzero is $\pm (u_A \wedge u_C \wedge u_r) \otimes u_B$. 
Therefore we have $\tilde{\phi}_{pq}(u_w \otimes u_r) = \pm u^{\neq} \otimes (u_A \wedge u_C \wedge u_r) \otimes u_B$. 

Now we see that $\phi_{pq}(v_{pq})=\phi_{pq}(\epq^{\mpq(w)} (u_w \otimes u_p))=\epq^{\mpq(w)} \phi_{pq}(u_w \otimes u_p)
=\pm \epq^{\mpq(w)}(u^{\neq} \otimes (u_A \wedge u_C \wedge u_p) \otimes u_B)$ is a nonzero multiple of $u_{wt_{pq}}$. 
Thus (a) follows. 

Now consider another $(p', q') \in X$. Since $v_{p'q'}$ is a multiple of $e_{p'q'}^{m_{p'q'}(w)+1} (u_w \otimes u_{q'})$, 
$\phi_{pq}(v_{p'q'})=(\text{const.}) \cdot e_{p'q'}^{m_{p'q'}(w)+1}\tilde\phi_{pq}(u_w \otimes u_{q'})
=(\text{const.}) \cdot e_{p'q'}^{m_{p'q'}(w)+1} (u^{\neq} \otimes (u_A \wedge u_C \wedge u_{q'}) \otimes u_B)$. 
In order this to be nonzero, we must have $e_{p'q'}(u_A \wedge u_C \wedge u_{q'}) \neq 0$, 
since $u^{\neq}$ is annihilated by $e_{p'q'}^{m_{p'q'}(w)+1}$ and $u_B$ is annihilated by $e_{p'q'}$. 
Thus if $\phi_{pq}(v_{p'q'}) \neq 0$, then $p', q' \not\in A \cup C$. 

Note that, for an $r \in \{1, \ldots, n\}$, $r \in A \cup C$ if and only if $r<q$ and $w(r)>w(p)$
(we used here the fact that there exists no $p<r<q$ with $w(p)<w(r)<w(q)$). 
Using this and the fact $p' \leq \nu < q$, we see that $w(p') \leq w(p)$. 
Moreover, it follows that $w(q') \leq w(q)$ unless $q'>q$,  
but if $q'>q$ and $w(q')>w(q)$ then $\ell(wt_{p'q'}) > \ell(w)+1$
since $p'<q<q'$ and $w(p')<w(q)<w(q')$, and thus such a case cannot occur. 
Therefore we have (b). 
\end{proof}
\nocite{*}

\section{Tensor product of KP modules}
\label{generalcase}
\begin{thm}
For any $w \in S_n$ and $v \in S_\infty^{(n)}$, $\smod_w \otimes \smod_v$ has a KP filtration. 
\label{mainthm}
\end{thm}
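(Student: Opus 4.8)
The plan is to reduce the general tensor product $\smod_w \otimes \smod_v$ to an iterated application of the Monk case (Proposition \ref{monk}) by induction, using the $\Ext^1$ criterion of the cited theorem and Corollary \ref{filtrlem} as the bookkeeping tools. The key observation is that any $v \in S_\infty^{(n)}$ can be reached from the identity by a chain of length-increasing right multiplications by simple-reflection-type transpositions appearing in Monk's formula; equivalently, by the structure of Schubert polynomials, $\schub_v$ divides into a product-like process governed by Monk. Concretely, I would fix a reduced-word-style filtration of $v$: pick $\nu$ and write $v = v' t_{pq}$ in a way compatible with Monk, so that $\smod_{v'} \otimes \smod_{s_\nu}$ has $\smod_v$ as a subquotient (this is exactly what the proof of Proposition \ref{monk} gives us — a KP filtration of $\smod_{v'} \otimes \smod_{s_\nu}$ one of whose subquotients is $\smod_v$).

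First I would set up the induction on $\ell(v)$ (or on $\inv(v)$ in dominance order). The base case $v = \mathrm{id}$ is trivial since $\smod_{\mathrm{id}} = K$ and $\smod_w \otimes K \cong \smod_w$. For the inductive step, I would choose an appropriate $\nu$ and consider $\smod_w \otimes \smod_{v'} \otimes \smod_{s_\nu}$. By the inductive hypothesis applied to $\smod_w \otimes \smod_{v'}$ — note this requires the induction to be run with $w \in S_n$ fixed but $v'$ of smaller length, which is fine — that module has a KP filtration; call its subquotients $\smod_{u_1}, \ldots, \smod_{u_m}$. Then by Proposition \ref{monk} each $\smod_{u_i} \otimes \smod_{s_\nu}$ has a KP filtration, and since "having a KP filtration" is closed under extensions (the forward direction of the cited theorem together with the long exact sequence argument), $\smod_w \otimes \smod_{v'} \otimes \smod_{s_\nu}$ has a KP filtration. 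Finally I would identify $\smod_w \otimes \smod_v$ as a direct summand, or at least as a quotient-that-splits-off, of $\smod_w \otimes \smod_{v'} \otimes \smod_{s_\nu}$, using the fact that $\smod_{v'} \otimes \smod_{s_\nu}$ contains $\smod_v$ as a subquotient: tensoring the relevant short exact sequences with the (flat over $K$, hence exact) functor $\smod_w \otimes -$ and invoking Corollary \ref{filtrlem}(2) to pass the KP-filtration property to the submodule $\smod_w \otimes (\text{sub})$, then down to the subquotient $\smod_w \otimes \smod_v$.

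The main obstacle I anticipate is the last step: a priori $\smod_w \otimes -$ applied to a short exact sequence $0 \to L \to M \to N \to 0$ lets us deduce a KP filtration for $L$ only when \emph{both} $M$ and $N$ have one (Corollary \ref{filtrlem}(2)), not for an arbitrary subquotient in the middle of a filtration. So I cannot simply quote "subquotient of a module with KP filtration has KP filtration" — that is false in general. Instead I would need to be careful about the \emph{order} of the Monk filtration of $\smod_{v'} \otimes \smod_{s_\nu}$: arrange (exactly as in the indexing $X = \{(p_0,q_0), \dots\}$ trick in the proof of Proposition \ref{monk}, using the dominance/triangularity of $\inv$) that $\smod_v$ sits at the \emph{top} of the filtration, i.e. appears as a quotient $\smod_{v'} \otimes \smod_{s_\nu} \twoheadrightarrow \smod_v$, and dually that the remaining part $N$ (kernel) also has a KP filtration. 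Then $\smod_w \otimes (\smod_{v'} \otimes \smod_{s_\nu}) \twoheadrightarrow \smod_w \otimes \smod_v$ with kernel $\smod_w \otimes N$; by induction $\smod_w \otimes N$ has a KP filtration (it is built from $\smod_w \otimes \smod_{u}$ for various $u$ of length $< \ell(v)$), and the total space has one by the argument above, so Corollary \ref{filtrlem}(2) applied to $0 \to \smod_w \otimes N \to \smod_w \otimes(\smod_{v'}\otimes\smod_{s_\nu}) \to \smod_w \otimes \smod_v \to 0$ would give the result \emph{if} it ran the other way — it gives the \emph{sub}, not the quotient. To fix the direction I would instead use the $\Ext^1$ criterion directly: from the long exact sequence $\Ext^1(\smod_w \otimes (\smod_{v'}\otimes\smod_{s_\nu}), A) \to \Ext^1(\smod_w \otimes \smod_v, A)$ is not right either; rather $\Ext^1(\smod_w \otimes \smod_v, A) \hookrightarrow \Ext^2(\smod_w \otimes N, A)$ would require higher vanishing. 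The cleanest route, which I would adopt, is therefore: prove by a simultaneous induction that \emph{all} modules of the form $\smod_w \otimes \smod_{u_1} \otimes \cdots \otimes \smod_{u_k}$ with $w \in S_n$, $u_i \in S_\infty^{(n)}$ have KP filtrations, by inducting on $\sum \ell(u_i)$ and peeling off one Monk factor at a time from the right using Proposition \ref{monk} plus the extension-closedness; the statement for $\smod_w \otimes \smod_v$ is then the special case $k=1$, and no "subquotient" step is needed because Proposition \ref{monk} is applied to each KP subquotient of the already-filtered prefix, exactly as sketched in paragraph two. The genuinely delicate point that remains is verifying that this peeling terminates and that the intermediate modules stay within the class to which the cited criterion applies (finite-dimensional, weights in $\nonneg^n$) — both are straightforward since tensor products of finite-dimensional nonnegative-weight modules are again of that type.
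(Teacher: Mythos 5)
Your proposal correctly identifies the central difficulty — that ``subquotient of a module with KP filtration'' need not have a KP filtration, and that Corollary \ref{filtrlem}(2) only passes the property to the \emph{submodule} in a short exact sequence — but the ``cleanest route'' you settle on does not actually resolve it. If you induct on $\sum\ell(u_i)$ over modules $\smod_w\otimes\smod_{u_1}\otimes\cdots\otimes\smod_{u_k}$, peeling off a rightmost Monk factor $\smod_{s_\nu}$ only works when $u_k$ is already a simple reflection. When $u_k=v$ is general, the case $k=1$ is exactly the original statement, so the induction never gets off the ground: to relate $\smod_v$ to a tensor of $\smod_{s_\nu}$'s you are forced back to exhibiting $\smod_v$ as a subquotient (or quotient) of $\smod_{v'}\otimes\smod_{s_\nu}$, and as you yourself observe, after tensoring with $\smod_w$ the long exact sequence gives $\Hom(\smod_w\otimes N,A)\to\Ext^1(\smod_w\otimes\smod_v,A)\to\Ext^1(\smod_w\otimes(\smod_{v'}\otimes\smod_{s_\nu}),A)=0$, which only bounds $\Ext^1$ of the quotient by a $\Hom$ group that has no reason to vanish. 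So the proposal has a genuine gap rather than a different proof.

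The paper's actual argument runs the short exact sequence in the \emph{opposite} direction and inducts on $w$, not $v$. It introduces $T_w=\bigotimes_{2\le i\le n}\bigwedge^{l_i(w)}K^{i-1}$, a direct summand of a tensor product of copies of the $\smod_{s_{i-1}}$, so that $T_w\otimes\smod_v$ automatically has a KP filtration by repeated application of Proposition \ref{monk} plus Corollary \ref{filtrlem}(1). The key Lemma \ref{tillem} — proved via the Cauchy identity and the $\Ext^1$-ordering result $\Ext^1(\smod_x,\smod_u)\ne 0\Rightarrow u^{-1}\lex< x^{-1}$ from \cite{W} — produces an exact sequence $0\to\smod_w\to T_w\to N\to 0$ in which $N$ is filtered by $\smod_u$ with $u^{-1}\lex> w^{-1}$. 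Tensoring with $\smod_v$ and descending induction on $w^{-1}$ in lex order (base case $w=w_0$, where $T_{w_0}=\smod_{w_0}$) makes $N\otimes\smod_v$ and $T_w\otimes\smod_v$ both KP-filtered, and then Corollary \ref{filtrlem}(2) applies in the correct (submodule) direction to conclude for $\smod_w\otimes\smod_v$. The missing ingredient in your proposal is precisely this: embedding $\smod_w$ as a \emph{submodule} of a tensor-of-$\smod_{s_i}$-type module with controlled cokernel, rather than trying to realize $\smod_v$ as a quotient.
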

Thus if we let $n \rightarrow \infty$, we see that for any $w, v \in S_\infty$, the module
(over the Lie algebra $\borel_\infty$ of upper triangular matrices of infinite size) $\smod_w \otimes \smod_v$ has a filtration by KP modules, 
if we define $\smod_w$ appropriately for all $w \in S_\infty$ as a module over $\borel_\infty$ 
(in particular, we see that the theorem in fact holds for any $w, v \in S_\infty^{(n)}$). 

In order to prove this theorem, we begin with some observations. 
For a $w \in S_n$, we define a $\borel$-module $T_w = \bigotimes_{2 \leq i \leq n} \big( \bigwedge^{l_i(w)} K^{i-1} \big)$, 
where $l_i(w)=\#\{j<i : w(j)>w(i)\}$ as before. 
Since $T_w$ is a direct sum component of $\bigotimes_{2 \leq i \leq n} \bigotimes^{l_i(w)} K^{i-1}=\bigotimes_{2 \leq i \leq n} \bigotimes^{l_i(w)} \smod_{s_{i-1}}$, 
$T_w$ have a KP filtration by Proposition \ref{monk} and Corollary \ref{filtrlem}(1). 
We show the following lemma:
\begin{lem}
Let $w \in S_n$. 
Then there exists an exact sequence $0 \mor \smod_w \mor T_w \mor N \mor 0$
such that $N$ has a filtration whose each subquotient is isomorphic to some $\smod_u$ $(\text{$u \in S_n$, $u^{-1} \lex> w^{-1}$})$. 
Here for permutations $x$ and $y$, $x \lex> y$ if there exists an $i$ such that $x(1)=y(1), \ldots, x(i-1)=y(i-1), x(i)>y(i)$. 
\label{tillem}
\end{lem}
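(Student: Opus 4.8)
The plan is to construct the inclusion $\smod_w \inj T_w$ directly from the definition of $\smod_w$ and then analyze the cokernel. Recall $\smod_w = \ualg(\nplus)u_w \subset \bigotimes_j \bigwedge^{l_j(w)} K^n$, where $u_w = \bigotimes_j u_w^{(j)}$ and $u_w^{(j)} = u_{i_1} \wedge \cdots \wedge u_{i_{l_j}}$ with $\{i_1 < \cdots < i_{l_j}\} = \{i < j : w(i) > w(j)\}$. The key observation is that for $w \in S_n$, every index $i$ appearing in $u_w^{(j)}$ satisfies $i < j$, so in fact $u_w^{(j)} \in \bigwedge^{l_j(w)} K^{j-1}$ already, and only $j = 2, \ldots, n$ contribute nontrivially. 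Hence $u_w \in T_w = \bigotimes_{2 \le i \le n} \bigwedge^{l_i(w)} K^{i-1}$, and since each $K^{i-1} \subset K^n$ is a $\borel$-submodule, $T_w$ is naturally a $\borel$-submodule of $\bigotimes_j \bigwedge^{l_j(w)} K^n$; therefore $\smod_w = \ualg(\nplus)u_w \subseteq T_w$. This gives the injection, and $N := T_w/\smod_w$ is the cokernel; it remains to produce the claimed filtration of $N$.

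For the cokernel, I would first record that both $\smod_w$ and $T_w$ are weight modules with weights in $\nonneg^n$ (the latter because $K^{i-1}$ has weights $e_1, \ldots, e_{i-1}$), so $N$ is too, and $\ch T_w = \ch \smod_w + \ch N$, i.e. $\ch N = \ch T_w - \schub_w$. Via the Cauchy identity quoted above, $\ch T_w = \prod_i \ch(\bigwedge^{l_i(w)} K^{i-1})$ should expand as $\sum_{u \in S_n} c^w_u \schub_u$ for nonnegative integers $c^w_u$ with $c^w_w = 1$ (this is exactly the expansion of the monomial-type symmetric object attached to the shape $l(w)$ in the Schubert basis of $H_N$). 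One then wants to show, combinatorially, that every $u$ with $c^w_u \ne 0$ and $u \ne w$ satisfies $u^{-1} \lex> w^{-1}$; concretely, $\ch T_w$ is a sum of monomials $x^\alpha$ with $\alpha_i \le l_i(w)$ corresponding to column-strict-type choices, and the straightening of each such monomial into Schubert polynomials introduces only terms lexicographically above $w^{-1}$ in the relevant sense. Granting the character identity $\ch N = \sum_{u \ne w} c^w_u \schub_u$ with all $u^{-1} \lex> w^{-1}$, I would then invoke the criterion of the cited theorem from \cite{W}: since $T_w$ has a KP filtration (already noted in the excerpt) and $\smod_w$ has a KP filtration trivially, Corollary \ref{filtrlem} does not directly give $N$ one — but I can instead argue that $N$ has a KP filtration by descending induction on $w^{-1}$ in lex order, using that the ``top'' KP module $\smod_{u}$ for the lex-largest $u$ appearing must split off or appear as a sub, and peeling it off. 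More cleanly: prove by downward induction on $w^{-1}$ (lex order) the statement ``$T_w$ has a KP filtration with subquotients $\smod_u$, $u^{-1} \lex\geq w^{-1}$, exactly one of which is $\smod_w$''; the base case is $w = w_0$ (where $w^{-1}$ is lex-maximal, $l_i(w_0) = i-1$, and $T_{w_0} = \smod_{w_0}$ itself), and for the inductive step one uses the character expansion to see which $\smod_u$ occur and the inductive hypothesis applied to those $u$.

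The main obstacle is the combinatorial heart of the second paragraph: establishing that the Schubert expansion of $\ch T_w$ has the triangularity property with respect to the lex order on inverse permutations, with leading term $\schub_w$ and coefficient $1$. This is where I expect the real work to be — it amounts to a careful analysis of Kohnert-type or Monk-iterated straightening, or alternatively a direct argument that the dominant monomial $x^{l_n(w)}_1 \cdots$ of $\ch T_w$ forces $w$ into the expansion and that all other contributions come from strictly lex-larger $u^{-1}$. Once that triangular expansion is in hand, assembling the filtration of $T_w$ (hence of $N$) is a standard highest-weight-category argument via the $\Ext$-vanishing criterion of \cite{W}: one shows $\Ext^1(T_w, \smod_z^* \otimes K_{\rho + k\one}) = 0$ already (true since $T_w$ has a KP filtration), picks off the sub/quotient corresponding to the lex-extremal $u$, and iterates — the nontrivial input being precisely the knowledge of \emph{which} $u$ can occur, which is what the triangularity supplies.
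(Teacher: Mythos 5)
Your explicit construction of the inclusion $\smod_w \hookrightarrow T_w$ is a correct and pleasant observation (indeed $u_w^{(j)}$ only involves $u_1,\dots,u_{j-1}$, and $K^{j-1}$ is a $\borel$-submodule of $K^n$, so $T_w$ sits inside $\bigotimes_j \bigwedge^{l_j(w)} K^n$ and contains $\ualg(\nplus)u_w$). The paper does not build the inclusion this way — it instead shows that $T_w$ has a KP filtration whose multiplicities are lex-triangular with $\smod_w$ appearing exactly once, and then uses $\Ext^1(\smod_w,\smod_u)=0$ for $u^{-1}\lex> w^{-1}$ to rearrange the filtration so $\smod_w$ sits at the bottom — so your direct inclusion is a genuine simplification of that part.

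However, the core of the lemma is exactly the statement you label ``the main obstacle'' and then leave unproved: that in the Schubert expansion $\ch T_w=\sum_u n_{wu}\schub_u$ one has $n_{ww}=1$ and $n_{wu}\neq 0$ only for $u^{-1}\lex\geq w^{-1}$. Your sketch gestures at ``Kohnert-type or Monk-iterated straightening'' but does not carry it out, and that is not a routine verification. The paper proves it by pairing against the Cauchy identity: the bilinear form with $\langle\schub_u,\schub_{u'w_0}\rangle=\delta_{uu'}$ and $\langle x^{\rho-\alpha},\prod_i e_{\beta_i}(x_1,\dots,x_{n-i})\rangle=\delta_{\alpha\beta}$ identifies $n_{wu}$ with the coefficient of $x^{\inv(ww_0)}$ in $\schub_{uw_0}$, and then invokes the support result \cite[Proposition 5.4]{W} (if $(\smod_x)_{\inv(y)}\neq 0$ then $y^{-1}\lex\geq x^{-1}$) to get $(ww_0)^{-1}\lex\geq(uw_0)^{-1}$, i.e.\ $w^{-1}\lex\leq u^{-1}$. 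That citation is the missing ingredient in your argument; without it (or an equivalent combinatorial proof) the triangularity is unestablished. Relatedly, your proposed ``downward induction on $w^{-1}$'' to assemble the filtration of $N$ is vaguer than needed: once triangularity is known, what actually closes the argument is the second half of \cite[Proposition 5.4]{W}, namely $\Ext^1(\smod_w,\smod_u)=0$ when $u^{-1}\lex> w^{-1}$, which lets one reorder any KP filtration of $T_w$ to place the single copy of $\smod_w$ as a submodule; taking the quotient then gives $N$ with the claimed filtration.
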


We see first that the theorem easily follows from this lemma by induction on $w$. 
From the lemma we get an exact sequence
$0 \mor \smod_w \otimes \smod_v \mor T_w \otimes \smod_v \mor N \otimes \smod_v \mor 0$. 
Here $T_w \otimes \smod_v$ have a KP filtration by Proposition \ref{monk} and Corollary \ref{filtrlem}(1), 
since it is a direct sum component of $\left( \bigotimes_{2 \leq i \leq n} \bigotimes^{l_i(w)} \smod_{s_{i-1}} \right) \otimes \smod_v$. 
Moreover $N \otimes \smod_v$ have a KP filration by the induction hypothesis. 
Hence the claim follows from Corollary \ref{filtrlem}(2).

To prove Lemma \ref{tillem}, we need a result from \cite{W}. 
In \cite[Proposition 5.4]{W} the author showed that, for $x,y \in S_\infty^{(n)}$: 
\begin{itemize}
\item if $(\smod_x)_{\inv(y)} \neq 0$ (i.e. the coefficient of $x^{\inv(y)}$ in $\schub_x$ is nonzero) then $y^{-1} \lex\geq x^{-1}$, and
\item if $\Ext^1(\smod_x,K_{\inv(y)}) \neq 0$ then $y^{-1} \lex< x^{-1}$. 
\end{itemize}
In particular, if $w, u \in S_\infty^{(n)}$ and $\Ext^1(\smod_w, \smod_u) \neq 0$, 
then there exists an $x \in S_\infty^{(n)}$ such that $(\smod_u)_{\inv(x)} \neq 0$ and $\Ext^1(\smod_w, K_{\inv(x)}) \neq 0$, 
and thus $u^{-1} \lex\leq x^{-1} \lex< w^{-1}$. 

Let us now prove Lemma \ref{tillem}. 
\begin{proof}
Let $l_i=l_i(w)$ and let the integers $n_{wu} \in \ZZ$
be defined by $\prod_{2\leq i \leq n} e_{l_i}(x_1, \ldots, x_{i-1}) = \sum_{x \in S_n} n_{wu} \schub_u$ 
where $e_k$ denotes the $k$-th elementary symmetric polynomial. 
Since the left-hand side is the character of $T_w$, 
the number $n_{wu}$ is the number of times $\smod_u$ appears as a subquotient in (any) KP filtration of $T_w$. 

Since 
$\sum_{u \in S_n} \schub_u(x) \schub_{uw_0}(y)=\prod_{i+j\leq n} (x_i+y_j)
=\sum_{0 \leq a_i \leq n-i} \big( \prod_{1 \leq i \leq n-1} x_i^{n-i-a_i} \cdot \prod_{1 \leq i \leq n-1} e_{a_i}(y_1, \ldots, y_{n-i}) \big)$, 
there exists a bilinear form $\langle, \rangle : H_N \times H_N \mor \ZZ$
such that $\langle \schub_u, \schub_{u'w_0} \rangle = \delta_{uu'}$
and $\langle x^{\rho-\alpha}, \prod_{1 \leq i \leq n-1} e_{\beta_i}(x_1, \ldots, x_{n-i}) \rangle = \delta_{\alpha, \beta}$
where $\rho=(n-1, n-2, \ldots, 0)$. 
Then
\begin{align*}
n_{wu} &= \langle \schub_{uw_0}, \prod_{2\leq i \leq n} e_{l_i}(x_1, \ldots, x_{i-1}) \rangle \\
&= (\text{coefficient of $x_1^{n-1-l_n}x_2^{n-2-l_{n-1}}\cdots$ in $\schub_{uw_0}$}). 
\end{align*}
Here, $n-k-l_{n+1-k}=n-k-\#\{j<n+1-k : w(j)>w(n+1-k)\}=\#\{j<n+1-k : w(j)<w(n+1-k)\}
=\#\{j>k : ww_0(j)<ww_0(k)\}=\inv(ww_0)_k$, 
and thus the number $n_{wu}$ is equal to the coefficient of $x^{\inv(ww_0)}$ in $\schub_{uw_0}$,
which is nonzero only if $(ww_0)^{-1} \lex\geq (uw_0)^{-1}$, 
which is equivalent to $w^{-1} \lex\leq u^{-1}$. 
Moreover, if $u=w$ then we see that $n_{ww}=1$. 
Thus the subquotients of (any) KP filtration of $T_w$ 
are the modules $\smod_u$ ($u^{-1} \lex> w^{-1}$), 
together with $\smod_w$ which occurs only once. 
Since $\Ext^1(\smod_w, \smod_u)=0$ for $u^{-1} \lex> w^{-1}$, 
we can take the filtration to satisfy the additional condition that $\smod_w$ occurs as a submodule of $T_w$. 
This completes the proof of Lemma \ref{tillem}. 
\end{proof}

As we pointed in the previous paper (\cite[Proposition 8.3, Theorem 7.2]{W}), Theorem \ref{mainthm} has several interesting corollaries: 
\begin{cor}
\label{pleth}
Let $\lmb$ be a partition and let $s_\lmb$ be the Schur functor corresponding to $\lmb$. 
Then for any $w \in S_\infty^{(n)}$, 
$s_\lmb(\smod_w)$ has a KP filtration. 
In particular, if $\schub_w$ is written as a sum of monomials as $\schub_w=x^\alpha+x^\beta+\cdots$, 
then the polynomial $s_\lmb(x^\alpha, x^\beta, \cdots)$ (here $s_\lmb$ denotes the Schur function)
is a positive sum of Schubert polynomials. 
\end{cor}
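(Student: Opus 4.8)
\textbf{Proof proposal for Corollary \ref{pleth}.}
The plan is to deduce the corollary from Theorem \ref{mainthm} together with Corollary \ref{filtrlem}, by realizing $s_\lmb(\smod_w)$ as a direct summand of a large enough tensor power of $\smod_w$. Recall that for a vector space (or $\borel$-module) $V$, the Schur functor $s_\lmb(V)$ is by definition a direct summand of $V^{\otimes |\lmb|}$, cut out by the idempotent $c_\lmb/(\text{const.})$ attached to a Young symmetrizer for $\lmb$ in the group algebra $K[S_{|\lmb|}]$ (this is where $\mathrm{char}\,K = 0$ is used). Since the $S_{|\lmb|}$-action permuting the tensor factors commutes with the diagonal $\borel$-action, this decomposition is a decomposition of $\borel$-modules, so $s_\lmb(\smod_w)$ is a $\borel$-module direct summand of $\smod_w^{\otimes |\lmb|}$.

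First I would check that $\smod_w^{\otimes m}$ has a KP filtration for every $m \geq 1$: this is an immediate induction on $m$ using Theorem \ref{mainthm} (the case $w \in S_\infty^{(n)}$ being covered by the remark following the theorem that it holds for all $w, v \in S_\infty^{(n)}$), since if $\smod_w^{\otimes(m-1)}$ has a KP filtration then it is built from modules $\smod_u$, and tensoring the filtration with $\smod_w$ and applying Theorem \ref{mainthm} to each $\smod_u \otimes \smod_w$ — together with Corollary \ref{filtrlem}(2) applied along the filtration — shows $\smod_w^{\otimes m} = \smod_w^{\otimes(m-1)} \otimes \smod_w$ has one too. Next, taking $m = |\lmb|$, write $\smod_w^{\otimes m} = s_\lmb(\smod_w) \oplus Q$ as $\borel$-modules for the complementary summand $Q$. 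By Corollary \ref{filtrlem}(1), since $\smod_w^{\otimes m}$ has a KP filtration, so does each direct summand; in particular $s_\lmb(\smod_w)$ has a KP filtration. (One should note that $s_\lmb(\smod_w)$ is finite-dimensional with weights in $\nonneg^n$, since $\smod_w$ is, so the hypotheses of the criterion theorem are met — this is what makes Corollary \ref{filtrlem}(1) applicable.)

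For the ``in particular'' statement: a module $M$ with a KP filtration has $\ch(M)$ equal to a nonnegative integer combination of Schubert polynomials, since the character is additive along the filtration and each subquotient $\smod_u$ contributes $\schub_u$. Applying this to $M = s_\lmb(\smod_w)$, it remains to identify $\ch(s_\lmb(\smod_w))$ with the plethystic expression $s_\lmb(x^\alpha, x^\beta, \dots)$. This is the standard fact that the character of a Schur functor applied to a module is the plethysm of the Schur function $s_\lmb$ with the character of that module: if $\ch(\smod_w) = \schub_w = x^\alpha + x^\beta + \cdots$ as a sum of (not necessarily distinct) monomials, then $\ch(s_\lmb(\smod_w)) = s_\lmb(x^\alpha, x^\beta, \dots)$, because the character only sees the $\csa$-weight-space multiplicities and the Schur functor construction is the same as the one defining symmetric-function plethysm. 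Combining, $s_\lmb(x^\alpha, x^\beta, \dots) = \ch(s_\lmb(\smod_w))$ is a nonnegative sum of Schubert polynomials.

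The only genuinely delicate point is the very first one: verifying that the Schur functor applied to a $\borel$-module really is a $\borel$-module direct summand of the tensor power, i.e. that the Young-symmetrizer idempotent commutes with the (non-semisimple, infinite-dimensional as a Lie algebra acting, but acting diagonally) $\borel$-action — but this is formal, since $S_m$ acts by permuting tensor factors and the $\borel$-action is the diagonal one, so the two actions commute on the nose, and $\mathrm{char}\,K = 0$ guarantees the relevant idempotents exist in $K[S_m]$. Everything else is bookkeeping with characters and repeated application of Corollary \ref{filtrlem}.
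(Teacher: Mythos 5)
Your approach is the right one, and is surely what the cited references \cite[Proposition 8.3, Theorem 7.2]{W} do: realize $s_\lmb(\smod_w)$ as a $\borel$-module direct summand of $\smod_w^{\otimes |\lmb|}$ via a Young symmetrizer (possible since $\mathrm{char}\,K=0$ and the $S_{|\lmb|}$-action commutes with the diagonal $\borel$-action), show the tensor power has a KP filtration by iterating Theorem~\ref{mainthm}, apply Corollary~\ref{filtrlem}(1) to pass to the summand, and finally identify $\ch(s_\lmb(\smod_w))$ with the plethysm $s_\lmb(x^\alpha, x^\beta, \dots)$. The verification of the hypotheses (finite dimensionality, weights in $\nonneg^n$) is also correctly noted.

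One small but genuine slip in the inductive step: the fact you need — that if $0 \mor L \mor M \mor N \mor 0$ is exact and both $L$ and $N$ admit KP filtrations, then so does $M$ — is \emph{not} Corollary~\ref{filtrlem}(2), which asserts the reverse implication ($M$ and $N$ filtered $\Rightarrow$ $L$ filtered, via the $\Ext$ criterion). The statement you actually need is the elementary refinement fact: given a filtration of $\smod_w^{\otimes m}$ with subquotients $\smod_{u} \otimes \smod_w$, pull back a KP filtration of each subquotient to obtain a KP filtration of the whole module. This requires no homological input at all. You should replace the citation of Corollary~\ref{filtrlem}(2) with this observation; with that correction the proof is complete.
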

\begin{cor}
Let $u, v, w \in S_n$. 
Then the coefficient of $\schub_w$ in the expansion of $\schub_u\schub_v$ into Schubert polynomials
is equal to the dimension of 
$\Hom_\borel(\smod_u \otimes \smod_v, \smod_{w_0w}^* \otimes K_\rho)
\cong \Hom_\borel(\smod_u \otimes \smod_v \otimes \smod_{w_0w}, K_\rho)$. 
\end{cor}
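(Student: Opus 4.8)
The plan is to deduce the final Corollary directly from Theorem~\ref{mainthm} together with the characterization of KP filtrations recalled above, specifically the pairing properties coming from the Cauchy identity. First I would observe that since $\smod_u \otimes \smod_v$ has a KP filtration by Theorem~\ref{mainthm}, its multiplicity of $\smod_w$ in such a filtration equals the coefficient $c^w_{uv}$ of $\schub_w$ in $\schub_u\schub_v$; this is because taking characters, $\ch(\smod_u\otimes\smod_v)=\schub_u\schub_v$, and the multiplicities of the subquotients in a KP filtration read off the expansion of the character in the basis $\{\schub_w\}$ (this multiplicity is well-defined, being determined by the character). So it suffices to express this multiplicity as the dimension of the stated $\Hom$-space.

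The key computational step is to show that for a module $M$ with a KP filtration, the multiplicity of $\smod_w$ as a subquotient equals $\dim\Hom_\borel(M, \smod_{w_0w}^*\otimes K_\rho)$. I would prove this by induction on the length of the filtration, using the long exact sequence for $\Hom$ and the vanishing $\Ext^1(\smod_u, \smod_{w_0w}^*\otimes K_\rho)=0$ for all $u\in S_n$ (which holds since $\smod_{w_0w}^*\otimes K_\rho$ has the form appearing in part (2) of the cited Theorem from \cite{W}, so $\Ext^{i\geq 1}$ against anything with a KP filtration vanishes). Thus $\Hom_\borel(-,\smod_{w_0w}^*\otimes K_\rho)$ is exact on short exact sequences of modules with KP filtrations, and $\dim\Hom_\borel(M,\smod_{w_0w}^*\otimes K_\rho)$ is additive along the filtration. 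It then remains to compute $\dim\Hom_\borel(\smod_u, \smod_{w_0w}^*\otimes K_\rho)$ for each subquotient $\smod_u$, and to check this equals $\delta_{u,w}$. Using the adjunction $\Hom_\borel(\smod_u, \smod_{w_0w}^*\otimes K_\rho)\cong\Hom_\borel(\smod_u\otimes\smod_{w_0w}, K_\rho)$, by taking $\csa$-weights the dimension is at most the coefficient of $x^\rho$ in $\schub_u\schub_{w_0w}$; by the Cauchy identity $\sum_{u\in S_n}\schub_u(x)\schub_{uw_0}(y)=\prod_{i+j\leq n}(x_i+y_j)$, reindexing shows the coefficient of $x^\rho$ in $\schub_u\schub_{w_0w}$ is $\delta_{u,w}$ (using $w_0 w = (w^{-1}w_0)^{-1}$ appropriately, or directly $\schub_{w_0 w}=\schub_{(w^{-1})w_0}$-type manipulation to match the Cauchy pairing $\langle\schub_u,\schub_{u'w_0}\rangle=\delta_{uu'}$ already invoked in the proof of Lemma~\ref{tillem}). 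When this coefficient is $1$, one must also exhibit a nonzero homomorphism: the highest-weight vector argument, namely the canonical $\borel$-map $\smod_w\otimes\smod_{w_0w}\to K^{\otimes\bullet}\to K_\rho$ sending $u_w\otimes u_{w_0 w}$ to a generator of the one-dimensional weight-$\rho$ piece, does the job — this is essentially the pairing between $\smod_w$ and $\smod_{w_0 w}^*$ that is implicit in the highest-weight category structure of \cite{W}.

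Finally I would assemble: apply the additivity of $\dim\Hom_\borel(-,\smod_{w_0w}^*\otimes K_\rho)$ to a KP filtration of $\smod_u\otimes\smod_v$, and since each subquotient $\smod_{w'}$ contributes $\delta_{w',w}$, the total equals the multiplicity of $\smod_w$, which as noted is $c^w_{uv}$. The displayed isomorphism $\Hom_\borel(\smod_u\otimes\smod_v,\smod_{w_0w}^*\otimes K_\rho)\cong\Hom_\borel(\smod_u\otimes\smod_v\otimes\smod_{w_0w},K_\rho)$ is just the tensor-hom adjunction together with the self-duality considerations for the one-dimensional module $K_\rho$, so it needs only a one-line justification.

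I expect the main obstacle to be the precise bookkeeping in the Cauchy-identity step: pinning down that $\dim\Hom_\borel(\smod_u,\smod_{w_0w}^*\otimes K_\rho)$ is exactly $\delta_{u,w}$ rather than merely bounded by the weight-space dimension. The weight-space count gives an upper bound of the coefficient of $x^\rho$ in $\schub_u\schub_{w_0 w}$, and one must (i) identify that coefficient with $\delta_{u,w}$ via the correct reindexing of the Cauchy product — the same bilinear form $\langle\,,\,\rangle$ used in the proof of Lemma~\ref{tillem} — and (ii) verify the bound is attained when $u=w$ by writing down an explicit nonzero $\borel$-homomorphism out of $\smod_w\otimes\smod_{w_0 w}$ landing in $K_\rho$. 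Both points are standard given the setup of \cite{W}, but they are where the argument has content; everything else is formal homological algebra and character manipulation.
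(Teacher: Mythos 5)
The paper itself gives no proof of this corollary; it simply cites \cite[Proposition~8.3, Theorem~7.2]{W}, where the relevant highest-weight-category machinery is developed. Your overall strategy — additivity of $\dim\Hom_\borel(-,\smod_{w_0 w}^*\otimes K_\rho)$ along a KP filtration via the $\Ext^1$-vanishing, then reducing to the orthogonality $\dim\Hom_\borel(\smod_u,\smod_{w_0 w}^*\otimes K_\rho)=\delta_{uw}$ — is the right one, and it is exactly the content of those cited results.

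However, your argument for the orthogonality itself contains a genuine error. You claim the weight-space upper bound $\dim(\smod_u\otimes\smod_{w_0 w})_\rho$, i.e.\ the coefficient of $x^\rho$ in $\schub_u\schub_{w_0 w}$, equals $\delta_{uw}$ ``by the Cauchy identity.'' This is false. Take $n=3$, $u=132$, $w=213$, so $w_0 w=231$; then $\schub_{132}\schub_{231}=(x_1+x_2)(x_1x_2)=x_1^2x_2+x_1x_2^2$, whose coefficient of $x^\rho=x_1^2x_2$ is $1$, not $\delta_{132,213}=0$. The Cauchy identity only gives $\sum_u\schub_u\schub_{uw_0}=\prod_{i+j\le n}(x_i+x_j)$, and the bilinear form $\langle\,,\,\rangle$ used in Lemma~\ref{tillem} is built on the $x/y$-separated form of that identity; it is \emph{not} the ``coefficient of $x^\rho$ in the product'' pairing. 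The actual dimension of $\Hom_\borel(\smod_u\otimes\smod_{w_0 w},K_\rho)$ is the $\rho$-weight space of the $\nplus$-coinvariants, which in the example above drops from $1$ to $0$; the naive weight count does not see this. The true statement $\dim\Hom_\borel(\smod_u,\smod_{w_0 w}^*\otimes K_\rho)=\delta_{uw}$ holds because $\smod_u$ and $\smod_{w_0 w}^*\otimes K_\rho$ are the standard and costandard objects of the highest weight category constructed in \cite{W}, and should be invoked from there (or re-proved via that structure), not derived from a character coefficient. With this replacement your remaining steps — the adjunction isomorphism and the additivity argument — are fine and match the intended derivation.
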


\bibliographystyle{plain}
%\bibliography{references}

\end{document}